\newcommand {\cR}  {\mathcal{R}}
\DeclareMathOperator{\rank}{rank}
\DeclareMathOperator{\Real}{Re}
\DeclareMathOperator{\Imag}{Im}
\DeclareMathOperator{\Rs}{Rs}
\newcommand\Item[1][]{%
  \ifx\relax#1\relax  \item \else \item[#1] \fi
  \abovedisplayskip=0pt\abovedisplayshortskip=0pt~\vspace*{-1.25\baselineskip}}
\newcommand {\C}        {{\mathbb{C}}}
\newcommand{\sn}{\mathsf{n}}
\newcommand{\sk}{\mathsf{k}}
\newcommand{\sm}{\mathsf{m}}
\renewcommand{\sp}{\mathsf{p}}
\newcommand{\sd}{\mathsf{d}}
\newcommand{\sr}{\mathsf{r}}
\newcommand{\sq}{\mathsf{q}}
\DeclareMathOperator{\Ran}{Ran}
\newtheorem{assumption}[theorem]{Assumption}
\title{Derivative Interpolating Subspace Frameworks for Nonlinear Eigenvalue Problems}
\author{
Rifqi Aziz\thanks{Ko\c{c} University, Department of Mathematics, Rumeli Feneri Yolu 34450, Sar{\i}yer, Istanbul, Turkey, E-Mail: \texttt{raziz14@ku.edu.tr}.}  \and
Emre Mengi\thanks{Ko\c{c} University, Department of Mathematics, Rumeli Feneri Yolu 34450, Sar{\i}yer, Istanbul, Turkey, E-Mail: \texttt{emengi@ku.edu.tr}.} \and
Matthias Voigt\thanks{Techni\-sche Universit\"at Berlin, Institut f\"ur Mathematik, Stra{\ss}e des 17. Juni 136, 10623 Berlin, Germany, E-Mail: \texttt{mvoigt@math.tu-berlin.de}.}
}
\begin{document}
\maketitle

\begin{abstract}
We first consider the problem of approximating a few eigenvalues of a rational matrix-valued 
function closest to a prescribed target. It is assumed that the proper rational part of the rational 
matrix-valued function is expressed in the transfer function form $H(s) = C (sI - A)^{-1} B$,
where the middle factor is large, whereas the number of rows of $C$ and the number of 
columns of $B$ are equal and small. We propose a subspace framework that performs 
two-sided or one-sided projections on the state-space representation of $H(\cdot)$, commonly employed in 
model reduction and giving rise to a reduced transfer function. At every iteration, the projection 
subspaces are expanded to attain Hermite interpolation conditions at the eigenvalues of the 
reduced transfer function closest to the target, which in turn leads to a new reduced transfer 
function. We prove in theory that, when a sequence of eigenvalues of the reduced transfer functions
converges to an eigenvalue of the full problem, it converges at least at a quadratic rate. 
In the second part, we extend the proposed framework to locate the eigenvalues of a general 
square large-scale nonlinear meromorphic matrix-valued function $T(\cdot)$, where we 
exploit a representation $\cR(s) = C(s) A(s)^{-1} B(s) - D(s)$ defined in terms of the block 
components of $T(\cdot)$. The numerical experiments illustrate that the proposed framework 
is reliable in locating a few eigenvalues closest to the target point, and that, with respect to runtime, 
it is competitive to established methods for nonlinear eigenvalue problems.
\end{abstract}

\begin{keywords}
Nonlinear eigenvalue problems, large scale, subspace projections, Hermite interpolation, 
quadratic convergence, rational eigenvalue problems.
\end{keywords}

\begin{AMS}
65F15, 65D05, 34K17
\end{AMS}

\section{Introduction}
The numerical solutions of nonlinear eigenvalue problems have been a major field of research in the last twenty years \cite{Mehrmann2004, Guttel2017}. Numerical algorithms are proposed to estimate the eigenvalues 
of a nonlinear matrix-valued function either within a prescribed region, or closest to a prescribed target point in 
the complex plane. 

Earlier works are mostly focused on polynomial and rational eigenvalue problems \cite{Tisseur2001, Mackey2006, Su2011}. 
More recently, some of the attention has shifted to nonlinear eigenvalue problems that are neither polynomial nor rational. 
Various applications give rise to such non-polynomial, non-rational eigenvalue problems,
including the stability analysis of delay systems \cite{Guttel2017}, numerical solutions of elliptic 
PDE eigenvalue problems by the boundary element method \cite{Effenberger2012}, or finite element 
discretizations of differential equations with nonlinear boundary conditions depending on
an eigenvalue parameter \cite{Betcke2012}.

The nonlinear eigenvalue problem setting that we consider in this work is as follows. Let 
\begin{equation}\label{eq:NEP_int0}
	T(s) := f_1(s) T_1 + \dots + f_{\kappa}(s) T_\kappa,
\end{equation}
where the functions $f_1,\,\dots,\,f_\kappa : {\mathbb C} \rightarrow {\mathbb C}$ are meromorphic,
and $T_1,\, \dots,\, T_\kappa \in {\mathbb C}^{\sn \times \sn}$ are given matrices. 
Assume that the set $ \{ \lambda \in \C \; | \; \rank_{\lambda \in \C} T(\lambda) < \sn \}$ consists 
only of isolated points. Then we want to find $\lambda \in \C$ and $v \in \C^\sn \setminus \{0\}$ 
such that
\begin{equation}\label{eq:NEP_int}
	T(\lambda) v = 0.
\end{equation}
The scalar $\lambda \in {\mathbb C}$
satisfying \eqref{eq:NEP_int} is called an eigenvalue, and the vector $v \in {\mathbb C}^\sn \setminus \{0\}$ is
called a corresponding eigenvector. This setting is quite general. For instance, polynomial and
rational eigenvalue problems are special cases when $f_j(\cdot)$ are scalar-valued polynomials and
rational functions, respectively. Delay eigenvalue problems can also be expressed in this form
such that some of $f_j(\cdot)$ are exponential functions.

We propose an interpolation-based subspace framework to find a prescribed number of eigenvalues
of $T(\cdot)$ closest to a given target point $\tau \in {\mathbb C}$. At every iteration, a projected 
small-scale nonlinear eigenvalue problem is solved. Then the projection subspaces are expanded 
so as to satisfy Hermite interpolation properties at the eigenvalues of the projected problem. 
The projections we rely on are devised from two-sided or one-sided projections commonly employed in 
model-order reduction \cite{Gugercin2009}.
 
Our approach could be compared with linearization-based techniques for nonlinear eigenvalue
problems. 
However, such techniques first use either polynomial interpolation \cite{Effenberger2012, Beeumen2013}
or rational interpolation \cite{Guttel2014} to approximate the nonlinear matrix-valued function
with a polynomial or a rational matrix-valued function. Then the polynomial and rational eigenvalue
problems are linearized into generalized eigenvalue problems. In order to deal with large-scale
problems, typically Krylov subspace methods are applied to the generalized eigenvalue problem
in an efficient manner, in particular taking the structure of the linearization into account;
see for instance \cite{VanBeeumen2015} and \cite{Lietaert2018} for a one-sided and a two-sided
rational Arnoldi method, respectively.
Further advances in approximating a nonlinear eigenvalue problem by a rational one have been proposed very recently. In the recent works \cite{GutNT20,LieMPV21}, certain variants of the Antoulas-Anderson algorithm (AAA) are investigated for this purpose. Also \cite{GutNT20} gives a detailed error analysis, i.\,e., the authors investigate how the quality of the rational approximation affects the quality of the computed eigenvalues. Based on this analysis, they propose a new termination condition. Another recent work \cite{BreEM20} combines data-driven interpolation approaches by the Loewner framework with contour-integral methods to compute all eigenvalues inside a specified contour. 

The approach proposed here is somewhat related to \cite{BreEM20} since we also use interpolation, but otherwise it differs from the above-mentioned works.
We apply subspace projections directly to the nonlinear eigenvalue problem (and hence, preserve its nonlinear structure), and the
projection subspaces are not necessarily Krylov subspaces. Consequently, our approach
assumes the availability of numerical techniques for the solutions of the projected small-scale
nonlinear eigenvalue problems. In the case of polynomial or rational eigenvalue problems,
linearization based techniques are available to our use to obtain all of the eigenvalues
of the small-scale problem. Our numerical experience is that the proposed frameworks
here are comparable to the state-of-the-art methods in terms of computational efficiency,
and even a few times faster in some cases.


\textbf{Outline.} In the next section, we first describe an interpolatory subspace framework specifically for
rational eigenvalue problems. For instance, for a proper rational matrix-valued function,
which can always be expressed in the form $R(s) = C(s I_{\sk} - A)^{-1} B$ for some $A \in {\mathbb C}^{\sk \times \sk}$, $B \in {\mathbb C}^{\sk \times \sn}$, and 
$C \in {\mathbb C}^{\sn \times \sk}$, the framework addresses the case when $\sk \gg \sn$
and reduces the dimension of the middle factor. We give formal arguments
establishing the at least quadratic convergence of the proposed subspace framework.
In Section \ref{sec:NEP}, we extend the subspace framework idea for rational
eigenvalue problems to the general nonlinear eigenvalue problem setting of \eqref{eq:NEP_int}.
Sections \ref{sec:REP} and \ref{sec:NEP} present the frameworks to locate
only one eigenvalue closest to the prescribed target, and employ two-sided projections. Section \ref{sec:multiple_eig}
discusses how the frameworks can be generalized to locate a prescribed number of closest
eigenvalues to the target, while Section \ref{sec:one_sided} describes how one-sided
projections can be adopted in place of two-sided projections.
Finally, Section \ref{sec:num_results} illustrates the frameworks on synthetic examples, as well as
classical examples from the NLEVP data collection \cite{Betcke2012}, and confirms the validity of the theoretical findings in practice.

\section{Rational Eigenvalue Problems}\label{sec:REP}
A special important class of nonlinear eigenvalue problems are rational eigenvalue problems. 
There we want to find $\lambda \in \C$ and $v \in \C^\sn \setminus \{0\}$ such that
\begin{equation}\label{eq:REP}
	R(\lambda) v = 0,
\end{equation}
where $R(\cdot)$ is a rational matrix-valued function. We again assume that the set 
$ \{ \lambda \in \C \; | \; \rank_{\lambda \in \C} R(\lambda) < \sn \}$ consists only of isolated points.
The significance of the rational eigenvalue problem 
is due to several reasons. First, it  is a cornerstone for the solutions of nonlinear eigenvalue 
problems that are not rational; such nonlinear eigenvalue problems are often approximated by rational 
eigenvalue problems. Secondly, there are several applications that give rise to rational eigenvalue problems 
such as models for vibrations of fluid-solid structure, as well as vibrating mechanical structures \cite{Mehrmann2004}.

There are several ways to represent the function $R(\cdot)$. One possibility is to write it as
\begin{equation}\label{eq:REP_lform}
	R(s) = P(s) +  \sum_{j=1}^{\rho} \frac{p_j(s)}{d_j(s)}  E_j,
\end{equation}
where $d_j,\, p_j : {\mathbb C} \rightarrow {\mathbb C}$ are polynomials of degree ${\sk}_j$ and strictly less 
than ${\sk}_j$, respectively. Moreover, $E_j \in {\mathbb C}^{\sn \times \sn}$ for $j = 1,\,\dots,\, \rho$ are given 
matrices, and $P(\cdot)$ is a matrix polynomial of degree $\sd$ of the form
$
	P(s)
		:=
	s^\sd P_\sd  +  \dots  +  s P_1  +  P_0
$
for given $P_0,\, \dots,\, P_\sd \in {\mathbb C}^{\sn\times \sn}$.

It is usually the case that the matrices $E_j$ in \eqref{eq:REP_lform} are of low rank, hence they can be decomposed into
\begin{equation}\label{eq:lowrank_dc}
		E_j
			=
		L_j U_j^\ast
\end{equation}
for some $L_j,\, U_j \in {\mathbb C}^{\sn\times {\sr}_j}$ of full column rank such that ${\sr}_j \ll {\sn}$. In this case,
the proper rational part of $R(s)$ can always be expressed as a transfer function associated with
a linear time-invariant system. Formally, it can be shown that
\begin{equation}\label{eq:prop_rational}
		\sum_{j=1}^\rho
			 \frac{p_j(s)}{d_j(s)}  E_j
				  = 
			C ( sI_{\sk} - A )^{-1} B
\end{equation}
in (\ref{eq:REP_lform}) for some $A \in {\mathbb C}^{\sk \times \sk}$, $B \in {\mathbb C}^{\sk \times \sn}$, 
$C \in {\mathbb C}^{\sn \times \sk}$, where ${\sk} := {\sr}_1 {\sk}_1 + \dots + {\sr}_{\rho} {\sk}_{\rho}$. 
We refer to \cite{Su2011} and \cite[page 95]{Antoulas2005} for the details of the construction
of $A, B, C$ from the polynomials $p_j, d_j$ in (\ref{eq:REP_lform}) and matrices $L_j, U_j$ in (\ref{eq:lowrank_dc}).

The expression of the rational eigenvalue problem in the form
\begin{equation} \label{eq:TFform}
   R(s) = P(s) + C(sI_{\sk} - A)^{-1} B
\end{equation}
(or more generally, $R(s) = P(s) + C(s)D(s)^{-1} B(s)$ with matrix polynomials 
$P(\cdot)$, $A(\cdot)$, $B(\cdot)$, and $C(\cdot)$) is sometimes immediately available. Indeed,
this is another way to formulate a rational eigenvalue 
problem. This formulation plays a more prominent role in applications from linear systems and control theory. For example, the eigenvalues of the rational function $R(s) = C(sI_{\sk} - A)^{-1}B$ are 
the so-called \emph{transmission zeros} of the linear time-invariant system 
\begin{equation} \label{eq:full_sys}
    \frac{\mathrm{d}}{\mathrm{d}t} x(t) = A x(t) + B u(t), \quad y(t) = Cx(t), 
\end{equation}
see for example \cite{FraW75}. The transmission zeros play prominent roles in electronics applications such as in oscillation damping control \cite{MarPR07} or the design of filters \cite{QueRP03}. In the latter applications, the transmission zeros are used to specify frequency bands for which input signals are rejected.

One way of dealing with \eqref{eq:REP} is to convert it into a generalized eigenvalue problem. 
For instance, for any $\sk \times \sk$ matrix $F$, we have
\begin{equation}\label{eq:proper_REP_lin}
	C (\lambda F -  A)^{-1} B v = 0		\quad \Rightarrow \quad
	\left(	
		\begin{bmatrix}
			A	&	B	\\
			C	&	0	\\
		\end{bmatrix}		
			-
	\lambda
		\begin{bmatrix}
			F	&	0	\\
			0	&	0	\\
		\end{bmatrix}
	\right)
		\begin{bmatrix}
			(\lambda F - A)^{-1} B v	\\
			v
		\end{bmatrix}
=	0.
\end{equation}
More generally, $\left\{ P(\lambda) +  C (\lambda F -  A)^{-1} B \right\} v = 0$
for $P(s) = \sum_{j = 0}^{\sd} s^j P_j$ and for any $\sk \times \sk$ matrix $F$ can be linearized into
\begin{equation}\label{eq:linear_EP}
\begin{split}
	({\mathcal A} - \lambda {\mathcal B}) z = 0, \qquad	\text{where}	\quad
	z
		:=
	\left[
		\begin{array}{c}
			(\lambda F - A)^{-1} B v	\\
			\hline
				\lambda^{\sd-1} v		\\
				\lambda^{\sd-2} v		\\
				\vdots		\\
				v
		\end{array}
	\right],
		\\
	{\mathcal A}
		:=
	\left[
		\begin{array}{c|cccc}
			A	&		&		&		&	B	\\
			\hline
			C	& P_{\sd-1}	& P_{\sd-2}	&	\dots	&	P_0	\\
				& -I_\sn		&	0	&	\dots	&	0	\\
				&		& \ddots	& \ddots	&	\vdots	\\
				&		&		&	-I_\sn	&	0			\\
		\end{array}
	\right],	\;\;
	{\mathcal B}
		:=
	\left[
		\begin{array}{c|cccc}
			F	&		&		&		&		\\
			\hline
				& -P_{\sd}	& 		&		&		\\
				& 		&	-I_\sn	&		&		\\
				&		& 		& \ddots	&		\\
				&		&		&		&	-I_\sn			\\
		\end{array}
	\right],
\end{split}
\end{equation}
where we set $\sd = 1$ and $P_1 = 0$ in the case $P(s) \equiv P_0$.
For $F = I_{\sn}$, there is a one-to-one correspondence between the 
eigenvalues of $L(s) := {\mathcal A} - s {\mathcal B}$ defined as in (\ref{eq:linear_EP})
and $R(\cdot)$ as in (\ref{eq:REP}). In particular, for this choice of $F$ the following holds:
\begin{itemize}
	\item If $\lambda$ is an eigenvalue of the pencil $L(s) := {\mathcal A} - s {\mathcal B}$,
	but not an eigenvalue of $A$, then $\lambda$ is also an eigenvalue of $R(\cdot)$ \cite{Su2011}. 
	\item Conversely, if $\lambda$ is an eigenvalue of $R(\cdot)$, it is also an eigenvalue of 
	$L(\cdot)$; see also \cite{Rommes2006}.
\end{itemize}

\subsection{The Subspace Method for Rational Eigenvalue Problems}
The setting we aim to address in this section is when the rational eigenvalue problem \eqref{eq:REP} is given in the transfer
function form \eqref{eq:TFform}, where the size of the matrix $A$ is very large compared to ${\sn} \cdot \sd$ 
(i.e., the size of $R(\cdot)$ $\cdot$ the degree of $P(\cdot)$), that is ${\sk} \gg {\sn} \cdot \sd$. In the
special case when  $P(s) \equiv P_0$, we aim to address the setting when $\sk \gg \sn$. 

Here, we propose a subspace framework that replaces the proper rational part $R_{\rm p}(s) := C (sI - A)^{-1} B$
of $R(\cdot)$ with a reduced one of the form
\[
	R^{{\mathcal W}, {\mathcal V}}_{\rm p} (s) := CV (s W^\ast V - W^\ast A V)^{-1} W^\ast B
\]
for two subspaces ${\mathcal W},\,{\mathcal V} \subseteq \C^\sk$ of equal dimension, say ${\sr}$ such that ${\sr} \ll {\sk}$, and matrices $W,\,V \in \C^{\sk \times {\sr}}$ whose columns form orthonormal bases for
the subspaces ${\mathcal W}$, ${\mathcal V}$, respectively. We remark that the ${\sr} \times {\sr}$ middle factor of the reduced 
proper rational function is much smaller than the ${\sk} \times {\sk}$ middle factor of the full problem.
The full rational function $R_{\rm p}(\cdot)$ and the reduced one $R^{{\mathcal W}, {\mathcal V}}_{\rm p}(\cdot)$ are
the transfer functions of the linear time-invariant systems (\ref{eq:full_sys}) and
\[
	\frac{\mathrm{d}}{\mathrm{d}t}W^\ast V x(t)  =  W^\ast A V x(t)  + W^\ast B u(t),	\quad	y(t) = C V x(t),
\]
respectively.
Hence, in the system setting, replacing $R_{\rm p}(\cdot)$ with $R^{{\mathcal W}, {\mathcal V}}_{\rm p}(\cdot)$
corresponds to restricting the state-space of \eqref{eq:full_sys} to ${\mathcal V}$, and then imposing a Petrov-Galerkin condition on the residual of the restricted state-space system to ${\mathcal W}$.

Our approach is interpolatory and inspired by model order reduction 
techniques \cite{Yousuff1985, DeVillemagne1987, Gugercin2013, Baur2014}, 
as well as by a recent subspace framework proposed for the estimation of the 
${\mathcal H}_{\infty}$ norm of a transfer function \cite{Aliyev2017}.  
The problem at hand \eqref{eq:REP} can be viewed as the minimization problem
\[
		\min_{\lambda \in {\mathbb C}} \sigma_{\min}( R(\lambda) ).
\]
Rather than this problem, at every iteration, we solve a reduced problem of the form
\begin{equation}\label{eq:red_problem}
		\min_{\lambda \in {\mathbb C}} 
			\sigma_{\min}\big( R^{{\mathcal W}, {\mathcal V}}(\lambda)\big),
\end{equation}
where $R^{{\mathcal W}, {\mathcal V}}(s):= P(s) + R^{{\mathcal W}, {\mathcal V}}_{\rm p}(s)$.
Then we expand the subspaces ${\mathcal W}, {\mathcal V}$ to $\widetilde{\mathcal W}, \widetilde{\mathcal V}$
so that
\begin{equation}\label{eq:Hermite_inter}
	R\big(\widetilde{\lambda}\big) = R^{\widetilde{\mathcal W}, \widetilde{\mathcal V}}\big(\widetilde{\lambda}\big)	\quad	\text{and}		\quad
			R'\big(\widetilde{\lambda}\big) = \left[ R^{\widetilde{\mathcal W}, \widetilde{\mathcal V}} \right]' \big(\widetilde{\lambda}\big)
\end{equation}
at a global minimizer $\widetilde{\lambda}$ of \eqref{eq:red_problem}. The procedure is repeated by solving
another reduced problem as in \eqref{eq:red_problem}, but with $\widetilde{\mathcal W}, \widetilde{\mathcal V}$ 
taking the role of ${\mathcal W}, {\mathcal V}$.

One neat issue here is that, recalling $L(s) = {\mathcal A} - s {\mathcal B}$ with ${\mathcal A}, {\mathcal B}$
as in (\ref{eq:linear_EP}) for $F = I_n$, finding the global minimizers of \eqref{eq:red_problem} amounts to computing
the eigenvalues of the pencil
\begin{equation}\label{eq:red_REP}
	\begin{split}
	L^{W, V}(s)
			& :=
		\begin{bmatrix}
			W^\ast	&	0	\\
				0	&	I_{\sn\sd}	\\
		\end{bmatrix}
	L(s)
		\begin{bmatrix}
			V	&	0	\\
			0	&	I_{\sn\sd}
		\end{bmatrix}
			=
	{\mathcal A}^{W,V}  - s {\mathcal B}^{W,V},	\quad \text{where}	\\
	{\mathcal A}^{W,V}
		& :=
	\left[
		\begin{array}{c|cccc}
			W^\ast A V	&		&		&		&	W^\ast B	\\
			\hline
			C V	& P_{\sd-1}	& P_{\sd-2}	&	\dots	&	P_0	\\
				& -I_\sn		&	0	&	\dots	&	0	\\
				&		& \ddots	& \ddots	&	\vdots	\\
				&		&		&	-I_\sn	&	0			\\
		\end{array}
	\right],	\\
	{\mathcal B}^{W,V}
		& :=
	\left[
		\begin{array}{c|cccc}
			W^\ast V	&		&		&		&		\\
			\hline
				& -P_{\sd}	& 		&		&		\\
				& 		&	-I_\sn	&		&		\\
				&		& 		& \ddots	&		\\
				&		&		&		&	-I_\sn			\\
		\end{array}
	\right],
	\end{split}
\end{equation}
which is immediate from (\ref{eq:linear_EP}) by replacing $A$, $B$, $C$, and $F$ with 
$W^\ast A V$, $W^\ast B$, $CV$, and $W^\ast V$, respectively. 
We remark that the pencil $L^{{W}, {V}}(\cdot)$ in \eqref{eq:red_REP} is of size $({\sr} + {\sn} \cdot \sd) \times ({\sr} + {\sn} \cdot \sd)$,
whereas the original pencil $L(s) = {\mathcal A} - s {\mathcal B}$ is of size $({\sk} + {\sn} \cdot \sd) \times ({\sk} + {\sn} \cdot \sd)$.  
As for the choice of at which eigenvalue $\widetilde{\lambda}$ of $L^{{W}, {V}}(\cdot)$
we would Hermite interpolate, we prescribe a target $\tau$ a priori, and choose $\widetilde{\lambda}$
as the eigenvalue of $L^{{W}, {V}}(\cdot)$ closest to $\tau$.

The only remaining issue that needs to be explained is how we expand the subspaces 
${\mathcal W}, {\mathcal V}$ into $\widetilde{\mathcal W}, \widetilde{\mathcal V}$ so as
to satisfy (\ref{eq:Hermite_inter}). Fortunately, the tools for this purpose have already been
established as elaborated in the following result. This result is an immediate corollary of 
\cite[Theorem 1]{Gugercin2009}.

\begin{lemma}\label{thm:main_interpolation}
Suppose that $\mu \in {\mathbb C}$ is not an eigenvalue of $A$.
Let $\widetilde{\mathcal W} = {\mathcal W} \oplus {\mathcal W}_\mu$ and 
$\widetilde{\mathcal V} = {\mathcal V} \oplus {\mathcal V}_\mu$, where ${\mathcal V}, {\mathcal W}$
are given subspaces of equal dimension, and ${\mathcal W}_\mu$, ${\mathcal V}_\mu$
are subspaces defined as 
		\[
			{\mathcal V}_\mu := \bigoplus_{j=1}^{\sq} \Ran \left((A - \mu I_{\sk})^{-j} B \right)
							\quad \text{and} \quad {\mathcal W}_\mu := \bigoplus_{j=1}^{\sq} \Ran \big( \big( C (A - \mu I_{\sk})^{-j} \big)^\ast \big)
		\]
for some positive integer ${\sq}$. Let $\widetilde{V}$ and $\widetilde{W}$ be basis matrices of $\widetilde{\mathcal{V}}$ and $\widetilde{\mathcal{W}}$, respectively and assume further that $\widetilde{W}^*A \widetilde{V}-\mu \widetilde{W}^*\widetilde{V}$ is invertible. Then we have
	\begin{enumerate}
		\item $R(\mu) = R^{\widetilde{\mathcal W}, \widetilde{\mathcal V}}(\mu)$, and		
		\item $R^{(j)}(\mu) = \left[ R^{\widetilde{\mathcal W}, \widetilde{\mathcal V}} \right]^{(j)}(\mu)\:$	
		for $j = 1,\,\dots,\, 2{\sq} - 1$,
	\end{enumerate}	
where $\: R^{(j)}(\cdot) \:$ and $\: \left[ R^{\widetilde{\mathcal W}, \widetilde{\mathcal V}} \right]^{(j)}(\cdot)\:$ denote the $j$th
derivatives of $R(\cdot)$ and $R^{\widetilde{\mathcal W}, \widetilde{\mathcal V}}(\cdot)$.
\end{lemma}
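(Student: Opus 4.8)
The plan is to reduce the two assertions to the classical moment-matching theorem for interpolatory projection of a transfer function — which is exactly \cite[Theorem 1]{Gugercin2009} — and to check that all of its hypotheses hold under the assumptions made here. The first step is to notice that the matrix polynomial part $P(\cdot)$ is untouched by the projection, so that
\[
   R(s) - R^{\widetilde{\mathcal W}, \widetilde{\mathcal V}}(s) \;=\; R_{\rm p}(s) - R_{\rm p}^{\widetilde{\mathcal W}, \widetilde{\mathcal V}}(s),
\]
where $R_{\rm p}(s) = C(sI_{\sk}-A)^{-1}B$ and $R_{\rm p}^{\widetilde{\mathcal W}, \widetilde{\mathcal V}}(s) = C\widetilde V\big(s\widetilde W^\ast\widetilde V - \widetilde W^\ast A\widetilde V\big)^{-1}\widetilde W^\ast B$. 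Hence it suffices to establish the interpolation identities $R_{\rm p}^{(j)}(\mu) = \big[ R_{\rm p}^{\widetilde{\mathcal W}, \widetilde{\mathcal V}}\big]^{(j)}(\mu)$ for $j = 0,1,\dots,2\sq-1$, which then imply (i) (the case $j=0$) and (ii).

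Before invoking the interpolation theorem I would record that both sides are analytic at $\mu$: since $\mu$ is not an eigenvalue of $A$, the map $s\mapsto (sI_{\sk}-A)^{-1}$ is analytic at $\mu$, and the hypothesis that $\widetilde W^\ast A\widetilde V - \mu\widetilde W^\ast\widetilde V$ is invertible says precisely that $\mu$ is not an eigenvalue of the reduced pencil $s\widetilde W^\ast\widetilde V - \widetilde W^\ast A\widetilde V$, so $R_{\rm p}^{\widetilde{\mathcal W}, \widetilde{\mathcal V}}$ is analytic at $\mu$ too; this invertibility also forces $\dim\widetilde{\mathcal V} = \dim\widetilde{\mathcal W}$ (so the reduced pencil is square), and it guarantees that $R_{\rm p}^{\widetilde{\mathcal W}, \widetilde{\mathcal V}}$ depends only on the subspaces $\widetilde{\mathcal V},\widetilde{\mathcal W}$ and not on the chosen basis matrices $\widetilde V,\widetilde W$. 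Next I would verify the two Krylov-type range conditions: by construction $\widetilde{\mathcal V} = {\mathcal V}\oplus{\mathcal V}_\mu \supseteq {\mathcal V}_\mu = \bigoplus_{j=1}^{\sq}\Ran\big((A-\mu I_{\sk})^{-j}B\big)$ and $\widetilde{\mathcal W} = {\mathcal W}\oplus{\mathcal W}_\mu \supseteq {\mathcal W}_\mu = \bigoplus_{j=1}^{\sq}\Ran\big((C(A-\mu I_{\sk})^{-j})^\ast\big)$; the extra directions carried by ${\mathcal V}$ and ${\mathcal W}$ are harmless because the theorem only requires \emph{containment} of these Krylov spaces in $\Ran\widetilde V$ and $\Ran\widetilde W$. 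With the $\widetilde{\mathcal V}$-containment alone one gets matching of the first $\sq$ moments at $\mu$, with the $\widetilde{\mathcal W}$-containment alone another $\sq$ moments, and — this is the reason for imposing both — the two together double the count to $2\sq$ moments, i.e. $R_{\rm p}^{(j)}(\mu) = \big[R_{\rm p}^{\widetilde{\mathcal W}, \widetilde{\mathcal V}}\big]^{(j)}(\mu)$ for $j = 0,\dots,2\sq-1$. This is exactly \cite[Theorem 1]{Gugercin2009} applied with $E = I_{\sk}$ and the single interpolation point $\mu$, in its block (rather than tangential) form. Combining with the first step finishes the proof.

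I expect the only genuinely delicate part to be the bookkeeping of hypotheses: confirming that the direct-sum construction really places the required rational Krylov spaces \emph{inside} the projection subspaces $\widetilde{\mathcal V},\widetilde{\mathcal W}$, that appending the additional directions ${\mathcal V},{\mathcal W}$ cannot destroy an interpolation property, and that the well-posedness assumption ``$\widetilde W^\ast A\widetilde V - \mu\widetilde W^\ast\widetilde V$ invertible'' coincides with the nonsingularity requirement of \cite[Theorem 1]{Gugercin2009}. Everything beyond that is a direct citation, so no substantial new argument is needed.
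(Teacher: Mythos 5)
Your proposal is correct and takes exactly the route the paper intends: the paper offers no proof of this lemma at all, stating only that ``This result is an immediate corollary of \cite[Theorem 1]{Gugercin2009},'' and your write-up is precisely the careful verification of that citation — the observation that $P(\cdot)$ cancels, the analyticity at $\mu$ on both sides, the containment $\widetilde{\mathcal V}\supseteq{\mathcal V}_\mu$, $\widetilde{\mathcal W}\supseteq{\mathcal W}_\mu$, and the identification of the invertibility hypothesis with the well-posedness condition in Gugercin's theorem.
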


The resulting subspace method is described formally in Algorithm \ref{alg:SM}, where
we assume that the proper rational part of $R(\cdot)$ is provided as an input in the transfer function
form \eqref{eq:prop_rational} in terms of $A,\, B,\, C$.
At iteration $\ell$, the subspaces ${\mathcal V}_{\ell-1},\, {\mathcal W}_{\ell-1}$ are expanded
into ${\mathcal V}_{\ell},\, {\mathcal W}_{\ell}$
in order to achieve $R(\lambda_\ell) = R^{{\mathcal W}_\ell, {\mathcal V}_\ell}(\lambda_\ell)$ as well as
$R^{(j)}(\lambda_\ell) = \left[ R^{{\mathcal W}_\ell, {\mathcal V}_\ell} \right]^{(j)} (\lambda_\ell)$ for $j = 1,\,\dots,\, 2{\sq}-1$.
Lines \ref{exp_start}--\ref{exp_end} of the algorithm fulfill this expansion task by augmenting  
$V_{\ell-1}$, $W_{\ell-1}$, matrices whose columns form orthonormal bases for ${\mathcal V}_{\ell-1}$ and ${\mathcal W}_{\ell-1}$, with additional columns as suggested by Theorem \ref{thm:main_interpolation}. Orthonormalizing the augmented
matrices gives rise to the matrices $V_\ell$, $W_\ell$ whose columns span the expanded subspaces ${\mathcal V}_\ell$, ${\mathcal W}_\ell$, respectively.
The next interpolation point $\lambda_{\ell+1}$ is then set equal to the eigenvalue of 
$L^{{W}_{\ell},{V}_{\ell}}(\cdot)$ closest to the target point $\tau$.
Note that in line \ref{return_spec} of Algorithm \ref{alg:SM}, letting $\sr := {\rm dim} \: {\mathcal V}_\ell$,
the vector $v_{\ell+1}$ is of size $\sr + \sn\sd$, and $ v_{\ell+1}$\small$(\sr+\sn (\sd-1) + 1:\sr + \sn \sd)$, \normalsize
denoting the vector composed of the last $\sn$ entries of $v_{\ell+1}$, is an eigenvector estimate
according to (\ref{eq:linear_EP}).

\begin{algorithm}[tb]
 \begin{algorithmic}[1]
 
\REQUIRE{matrices $P_1,\,\dots,\,P_{\sd}  \in \mathbb{C}^{{\sn}\times {\sn}}$ as in \eqref{eq:REP}, 
	and $A \in {\mathbb C}^{\sk \times \sk}$, $B \in {\mathbb C}^{\sk \times \sn}$, $C \in {\mathbb C}^{\sn\times \sk}$  
	as in \eqref{eq:prop_rational}, interpolation parameter ${\sq} \in {\mathbb Z}$ with ${\sq} \geq 2$,
	target $\tau \in {\mathbb C}$.}
\ENSURE{estimate $\lambda \in \mathbb{C}$ for the eigenvalue closest to $\tau$ and corresponding eigenvector estimate $v \in \mathbb{C}^\sn \setminus \{0\}$.}

\vskip .7ex

\STATE $\lambda_0 \gets \tau$.

\vskip .7ex
\textcolor{mygreen}{\textbf{$\%$ main loop}}
\FOR{$\ell = 0,\,1,\,2,\,\dots$}
	
	  \STATE $\widehat{V}_\ell \gets (A - \lambda_\ell I)^{-1} B$,
		  		$\widetilde{V}_\ell \gets  \widehat{V}_\ell $, $\widehat{W}_\ell \gets (A - \lambda_\ell I)^{-\ast} C^\ast$,
				and $\widetilde{W}_\ell \gets \widehat{W}_\ell$.	\label{exp_start}	
			\FOR{$j = 2,\, \dots,\, {\sq}$}
		  \STATE $\widehat{V}_\ell \gets (A - \lambda_\ell I)^{-1} \widehat{V}_\ell$ and  
		  		$\widetilde{V}_\ell \gets \begin{bmatrix} \widetilde{V}_{\ell} & \widehat{V}_\ell \end{bmatrix}$.
		  \STATE $\widehat{W}_\ell \gets (A - \lambda_\ell I)^{-\ast} \widehat{W}_\ell$
						and
				$\widetilde{W}_\ell \gets \begin{bmatrix} \widetilde{W}_{\ell} & \widehat{W}_\ell \end{bmatrix}$.
		        \ENDFOR 	
	
	\vskip .7ex
		        
	\textcolor{mygreen}{\textbf{$\%$ expand the subspaces to interpolate at $\lambda_\ell$}}
	\IF{$\ell = 0$}
		\STATE $V_0 \gets \operatorname{orth}\left( \widetilde{V}_0 \right)
						\text{ and }  W_0 \gets \operatorname{orth}\left( \widetilde{W}_0 \right)$.
	\ELSE	        
		\STATE $V_\ell \gets \operatorname{orth}\left(\begin{bmatrix} V_{\ell-1} & \widetilde{V}_\ell \end{bmatrix}\right)
			 \text{ and } W_\ell \gets \operatorname{orth}\left(\begin{bmatrix} W_{\ell-1} & \widetilde{W}_\ell \end{bmatrix}\right)$. \label{R_orthogonalize}
	\ENDIF	\label{exp_end}
	\STATE Form $L^{W_{\ell},V_{\ell}}(s) := 
						{\mathcal A}^{W_{\ell}, V_{\ell}} - s {\mathcal B}^{W_{\ell}, V_{\ell}}$ 
			as in \eqref{eq:red_REP}.

	\vskip .7ex	
	\textcolor{mygreen}{\textbf{$\%$ update the eigenvalue estimate}}		
	\STATE  $\lambda_{\ell+1}, v_{\ell+1} \gets \text{ the eigenvalue, eigenvector of } L^{{W}_{\ell},{V}_{\ell}}(\cdot)$
			closest to $\tau$.	\label{inter_point}
	\vskip .2ex
	\STATE \textbf{Return} $\lambda \gets \lambda_{\ell+1}$,
	$\: v \gets \small v_{\ell+1}$\small$(\sr+\sn (\sd-1) + 1:\sr + \sn \sd)$, \normalsize 
	where $\sr := {\rm dim} \: \Ran ( V_ \ell )$ if convergence has occurred.  \label{return_spec}
\ENDFOR
 \end{algorithmic}
\caption{Subspace method to compute a rational eigenvalue closest to a prescribed target}
\label{alg:SM}
\end{algorithm}

Choosing orthonormal bases for ${\mathcal V}_\ell$ and  ${\mathcal W}_\ell$ ensures that the norms of the projected 
matrices do not grow. Moreover, the projection matrices $V_\ell, W_\ell$ after orthonormalization are well-conditioned,
whereas, without orthonormalization, the subspace method above is likely to yield ill-conditioned projection matrices;
see the discussions at the beginning of Section \ref{sec:num_results} regarding the orthogonalization of the bases
for details. These are desirable properties from the numerical point of view. On the other hand, in projection-based 
model order reduction for linear ODE systems, often a bi-orthonormality condition $W_\ell^* V_\ell = I$ is enforced, 
e.\,g., by employing the non-symmetric Lanczos process. This ensures that the projected system is again an ODE. 
The benefit of such a condition in our context is however not clear a priori. In fact, to our best knowledge, an in-depth 
analysis of the influence of the choice of bases on the numerical properties of the projected problem is still missing in the literature. 

The subsequent three subsections of this section establish the quadratic convergence of Algorithm \ref{alg:SM}.
The arguments operate on the singular values of $R(s)$ and $R^{{\mathcal W}_\ell, {\mathcal V}_\ell}(s)$,
especially their smallest singular values. Sections \ref{sec:int_svals} and \ref{sec:analytic_svals} focus on the 
interpolatory properties between these singular values, and the analytical properties of the singular values as 
a function of $s$. Finally, Section \ref{sec:convergence_analysis} deduces the main quadratic convergence 
result by exploiting these interpolatory and analytical properties. 

\subsection{Interpolation of Singular Values}\label{sec:int_svals}
Algorithm \ref{alg:SM} is specifically tailored to satisfy the interpolation properties
\begin{equation}\label{eq:rational_interpolate}
     R(\lambda_k) = R^{{\mathcal W}_\ell, {\mathcal V}_\ell}(\lambda_k) \quad \text{and} \quad
     R^{(j)}(\lambda_k) = \left[ R^{{\mathcal W}_\ell, {\mathcal V}_\ell} \right]^{(j)} (\lambda_k) 
\end{equation}
for $j = 1,\,\dots,\, 2 {\sq} - 1$ and $k = 1,\,\dots,\, \ell$. It is a simple exercise to extend these interpolation
properties to the singular values of $R(\lambda_k)$ and $R^{{\mathcal W}_\ell, {\mathcal V}_\ell}(\lambda_k)$ for $k=1,\,\dots,\,\ell$.


Formally, let us consider the eigenvalues of the matrices
\[
        M(s)
              :=
              R(s)^\ast R(s)
		\quad     \text{and}     \quad
	M^{{\mathcal W}_\ell, {\mathcal V}_\ell}(s)
	     :=
	     	R^{{\mathcal W}_\ell, {\mathcal V}_\ell}(s)^\ast  R^{{\mathcal W}_\ell, {\mathcal V}_\ell}(s)
\]
as functions of $s$ which we denote with $\eta_1(s),\, \dots,\, \eta_{{\sn}}(s)$
and $\eta^{{\mathcal W}_\ell , {\mathcal V}_\ell }_1(s),\, \dots,\, \eta^{{\mathcal W}_\ell, {\mathcal V}_\ell }_{{\sn}}(s)$ and which are sorted in descending order. These eigenvalues
correspond to the squared singular values of the matrices $R(s)$ and $R^{{\mathcal W}_\ell, {\mathcal V}_\ell}(s)$, respectively.
By the definitions of $M(s)$ and $M^{{\mathcal W}_\ell, {\mathcal V}_\ell}(s)$
and exploiting the interpolation properties \eqref{eq:rational_interpolate}, next we deduce the desired interpolation result
concerning the singular values. Throughout the rest of this section, we employ the notations
\[
		\eta'_j(s) := \begin{bmatrix} \frac{\partial \eta_j(s)}{ \partial \Real(s) },& \frac{ \partial \eta_j(s) }{ \partial \Imag(s) } \end{bmatrix}
				\quad	\text{and}		\quad
		\big[ \eta^{{\mathcal W}_\ell , {\mathcal V}_\ell }_j \big]'(s) := 
    \begin{bmatrix} \frac{\partial \eta^{{\mathcal W}_\ell , {\mathcal V}_\ell }_j(s)}{\partial \Real(s)} ,& 
    		\frac{\partial \eta^{{\mathcal W}_\ell ,{\mathcal V}_\ell }_j(s)}{\partial \Imag (s)}
	\end{bmatrix},		
\] 
as well as
\begin{multline*}
	\nabla^2 \eta_j(s)
			:=
			\begin{bmatrix}
					\frac{\partial^2 \eta_j(s)}{ \partial \Real(s)^2 } & \frac{ \partial^2 \eta_j(s) }{ \partial \Real (s) \partial \Imag(s) }	\\
					 \frac{\partial^2 \eta_j(s)}{ \partial \Imag(s) \partial \Real (s) } & \frac{ \partial^2 \eta_j(s) }{ \partial \Imag (s)^2 } 
			\end{bmatrix} 
	\quad	\text{and}		\\
	\nabla^2  \eta^{{\mathcal W}_\ell , {\mathcal V}_\ell }_j (s)
		:=
			\begin{bmatrix}
					\frac{\partial^2 \eta^{{\mathcal W}_\ell , {\mathcal V}_\ell}_j(s)}{ \partial \Real (s)^2 } & 
							\frac{ \partial^2 \eta^{{\mathcal W}_\ell , {\mathcal V}_\ell}_j(s) }{ \partial \Real (s) \partial \Imag (s) }	\\
					 \frac{\partial^2 \eta^{{\mathcal W}_\ell , {\mathcal V}_\ell}_j(s)}{ \partial \Imag (s) \partial \Real (s) } & 
					 		\frac{ \partial^2 \eta^{{\mathcal W}_\ell , {\mathcal V}_\ell}_j(s) }{ \partial \Imag (s)^2 } 
			\end{bmatrix} 
\end{multline*}
for $j = 1,\,\dots,\, \sn$.
\begin{theorem}[Hermite interpolation]\label{thm:sval_Hermite_interpolate}
Regarding Algorithm \ref{alg:SM} with $\sq \ge 2$, the following assertions hold for $k = 1,\, \dots,\, \ell$ and $j = 1,\, \dots \sn$:
\begin{enumerate}
	\item[\bf (i)] It holds that $\eta_j(\lambda_k)	=	\eta^{{\mathcal W}_\ell , {\mathcal V}_\ell }_j(\lambda_k)$.
	\item[\bf (ii)] If $\eta_j(\lambda_k)$ is simple, then also $\eta^{{\mathcal W}_\ell , {\mathcal V}_\ell }_j(\lambda_k)$ is simple.
	In this case,
	\[
		 \eta'_j(\lambda_k)	=	\big[ \eta^{{\mathcal W}_\ell , {\mathcal V}_\ell }_j \big]'(\lambda_k)
		 	\quad	\mathrm{and}		\quad
		 \nabla^2 \eta_j(\lambda_k)	=	\nabla^2 \eta^{{\mathcal W}_\ell , {\mathcal V}_\ell }_j(\lambda_k).
	\]
\end{enumerate}
\end{theorem}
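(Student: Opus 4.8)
The plan is to transfer the matrix-valued Hermite interpolation conditions \eqref{eq:rational_interpolate} to the Hermitian matrix functions $M(s)$ and $M^{\mathcal W_\ell,\mathcal V_\ell}(s)$, and then to read off the claims from standard analytic perturbation theory for a simple eigenvalue of a Hermitian family depending real-analytically on the real variables $x=\Real s$ and $y=\Imag s$.

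\emph{Step 1 (from $R$ to $M$).} Since $\lambda_k$ is not an eigenvalue of $A$, and by the standing assumption of Lemma \ref{thm:main_interpolation} the reduced pencil is regular at $\lambda_k$, both $R(\cdot)$ and $R^{\mathcal W_\ell,\mathcal V_\ell}(\cdot)$ are holomorphic in a neighbourhood of $\lambda_k$. Writing $s = x + \ri y$ and using $\partial_x R(s) = R'(s)$, $\partial_y R(s) = \ri R'(s)$ (and the conjugate relations for $R(s)^\ast$), every partial derivative of $M(s) = R(s)^\ast R(s)$ of order at most $m$ with respect to $x$ and $y$, evaluated at $\lambda_k$, is one fixed polynomial expression in $R^{(0)}(\lambda_k),\dots,R^{(m)}(\lambda_k)$ and their conjugate transposes; the identical expression holds for $M^{\mathcal W_\ell,\mathcal V_\ell}$. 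Because $\sq \ge 2$ we have $2\sq-1 \ge 2$, so \eqref{eq:rational_interpolate} gives $R^{(j)}(\lambda_k) = \big[R^{\mathcal W_\ell,\mathcal V_\ell}\big]^{(j)}(\lambda_k)$ for $j = 0,1,2$, whence
\[
	M(\lambda_k) = M^{\mathcal W_\ell,\mathcal V_\ell}(\lambda_k), \qquad
	\partial M(\lambda_k) = \partial M^{\mathcal W_\ell,\mathcal V_\ell}(\lambda_k), \qquad
	\partial^2 M(\lambda_k) = \partial^2 M^{\mathcal W_\ell,\mathcal V_\ell}(\lambda_k),
\]
where $\partial$ and $\partial^2$ collect all first- and second-order partials with respect to $x,y$.

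\emph{Step 2 (assertion (i) and simplicity in (ii)).} Equality $M(\lambda_k) = M^{\mathcal W_\ell,\mathcal V_\ell}(\lambda_k)$ forces identical spectra, which is (i); moreover $\eta_j^{\mathcal W_\ell,\mathcal V_\ell}(\lambda_k) = \eta_j(\lambda_k)$ is an eigenvalue of the \emph{same} matrix, so it is simple whenever $\eta_j(\lambda_k)$ is.

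\emph{Step 3 (derivatives in (ii)).} Assume $\eta_j(\lambda_k)$ is simple. Then in a neighbourhood of $\lambda_k$ (viewed in $\R^2$) the $j$-th eigenvalue $\eta_j(s)$ of the Hermitian family $M(s)$ is real-analytic, stays simple, and possesses a real-analytic unit eigenvector $u(s)$; likewise for $\eta_j^{\mathcal W_\ell,\mathcal V_\ell}(s)$. By the first- and second-order eigenvalue perturbation formulas for a simple eigenvalue of a Hermitian matrix family — for a direction $\delta\in\R^2$,
\[
	D_\delta \eta_j(\lambda_k) = u^\ast (D_\delta M)(\lambda_k)\,u, \qquad
	D_\delta^2 \eta_j(\lambda_k) = u^\ast (D_\delta^2 M)(\lambda_k)\,u - 2\,u^\ast (D_\delta M)(\lambda_k)\big(M(\lambda_k)-\eta_j(\lambda_k)I\big)^{\dagger}(D_\delta M)(\lambda_k)\,u,
\]
with $u := u(\lambda_k)$ and $\dagger$ the Moore--Penrose pseudoinverse — the quantities $\eta_j'(\lambda_k)$ and $\nabla^2\eta_j(\lambda_k)$ are determined solely by $M(\lambda_k)$, $\partial M(\lambda_k)$, $\partial^2 M(\lambda_k)$, a unit eigenvector of $M(\lambda_k)$ for $\eta_j(\lambda_k)$, and the associated pseudoinverse. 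All of these data coincide with their $R^{\mathcal W_\ell,\mathcal V_\ell}$-counterparts by Step 1 (for the eigenvector, because the underlying matrix is literally the same and the unimodular phase cancels in the bilinear expressions). Substituting yields $\eta_j'(\lambda_k) = \big[\eta_j^{\mathcal W_\ell,\mathcal V_\ell}\big]'(\lambda_k)$ and $\nabla^2\eta_j(\lambda_k) = \nabla^2\eta_j^{\mathcal W_\ell,\mathcal V_\ell}(\lambda_k)$, which finishes the proof.

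\emph{Anticipated main obstacle.} The substantive point is Step 3: setting up the perturbation theory correctly for a Hermitian family that depends \emph{real}-analytically (not holomorphically) on $s$, and verifying that the Hessian formula truly involves nothing beyond $M(\lambda_k)$, its first two partials, and the shared eigenprojection, so that the normalization/phase ambiguity of $u(s)$ is immaterial. Everything else is either bookkeeping with the chain rule (Step 1) or immediate from equality of matrices (Step 2).
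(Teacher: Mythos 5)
Your proposal is correct and follows essentially the same route as the paper: transfer the Hermite interpolation conditions \eqref{eq:rational_interpolate} (with $\sq\ge 2$ ensuring agreement up to the second derivative) to equalities of $M$, $M'$, $M''$ at $\lambda_k$, then invoke the standard first- and second-order perturbation formulas for a simple eigenvalue of a Hermitian matrix family depending smoothly on real parameters, which is exactly the reference to Lancaster (1964) that the paper cites. The additional detail you supply in Step 3 --- spelling out the Hessian formula and observing that phase ambiguity of the eigenvector is immaterial --- is a more explicit rendering of the same argument, not a different one.
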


\begin{proof}
\begin{enumerate} 
\item[\bf (i)] The assertion immediately follows from (\ref{eq:rational_interpolate}), since 
$M(\lambda_k) = M^{{\mathcal W}_\ell, {\mathcal V}_\ell}(\lambda_k)$.
\item[\bf (ii)] In Algorithm \ref{alg:SM}, it is required that ${\sq} \geq 2$.
Hence, the assertions follow from \eqref{eq:rational_interpolate}, in particular from
\begin{align*}
	M(\lambda_k) &= M^{{\mathcal W}_\ell, {\mathcal V}_\ell}(\lambda_k), \\ 
	M'(\lambda_k) &= \big[ M^{{\mathcal W}_\ell, {\mathcal V}_\ell} \big]'(\lambda_k), \\ 
	M''(\lambda_k) &= \big[ M^{{\mathcal W}_\ell, {\mathcal V}_\ell} \big]''(\lambda_k),
\end{align*}
by using the analytical formulas for the first and second derivatives of eigenvalue functions
of a Hermitian matrix dependent on a real parameter  \cite{Lancaster1964}. 
\end{enumerate}
\end{proof}

The requirement that $\sq \geq 2$ appears to be essential for quadratic convergence of
the subspace framework. The arguments in the rest of this section establishing quadratic 
convergence does not apply for $\sq = 1$. In practice, we observe slower convergence
that is faster than linear convergence with $\sq = 1$.

\subsection{Analytical Properties of Singular Values}\label{sec:analytic_svals}
At an eigenvalue $\lambda_\ast$ of $R(\cdot)$, we must have $\eta_{{\sn}}(\lambda_\ast) = 0$.
Additionally, throughout the rest of this section, the eigenvalue $\lambda_\ast$ under consideration is assumed to be simple, i.\,e., $\eta_1(\lambda_\ast),\, \dots,\, \eta_{{\sn}-1}(\lambda_\ast) > 0$. There are appealing smoothness properties intrinsic to 
$\eta_1(\cdot), \, \dots, \, \eta_{{\sn}}(\cdot)$ as well as $\eta^{{\mathcal W}_\ell , {\mathcal V}_\ell }_1(\cdot), \, \dots, \, \eta^{{\mathcal W}_\ell , {\mathcal V}_\ell }_{{\sn}}(\cdot)$
in a neighborhood of an eigenvalue $\lambda_\ast$ of $R(\cdot)$, as long as the following assumption holds.
\begin{assumption}[Non-defectiveness]\label{assume:non_defect}
Let $\lambda_\ast$ be a simple eigenvalue of $R(\cdot)$ such that, for a given $\xi > 0$, we have
\begin{equation}\label{eq:non_degeneracy}
      \sigma_{\min}(A - \lambda_\ast I_\sk) \geq \xi		\quad     \text{and}   \quad  	
      					\sigma_{\min}( W^\ast_\ell A V_\ell - \lambda_\ast W^\ast_\ell V_\ell ) \geq \xi,
\end{equation}
where $\sigma_{\min}(\cdot)$ denotes the smallest singular value of its matrix argument.
\end{assumption}

An implication of the assumption above, combined with the Lipschitz continuity of the singular value functions, 
is the boundedness of the smallest singular values in \eqref{eq:non_degeneracy} away from zero
in a vicinity of $\lambda_\ast$. Formally,
there exists a neighborhood ${\mathcal N}(\lambda_\ast)$ of $\lambda_\ast$ -- independent of the choice of the subspaces
${\mathcal V}_\ell$ and ${\mathcal W}_\ell$ as long as (\ref{eq:non_degeneracy}) is satisfied -- such that
\begin{equation}\label{eq:uniform_non_degeneracy}
	 \sigma_{\min}(A - s I_\sk) \geq \xi/2		\quad     \text{and}   \quad  	\sigma_{\min}(W^\ast_\ell A V_\ell - s W^\ast_\ell V_\ell) \geq \xi/2
	 \quad \forall s \in {\mathcal N}(\lambda_\ast),
\end{equation}
see the beginning of the proof of Lemma A.1 in \cite{Aliyev2019}. 

The matrix-valued functions $M(\cdot)$ and $M^{{\mathcal W}_\ell, {\mathcal V}_\ell}(\cdot)$ 
are analytic in ${\mathcal N}(\lambda_\ast)$, 
which implies the following smoothness properties that we employ in the next section to analyze the rate of convergence.
The proofs of the first three parts of the result below are straightforward adaptations of those 
for Lemma A.1 and Lemma A.2 in \cite{Aliyev2019}. 
The proof of the fourth part is immediate from the second and third part. In the result and elsewhere, 
$\| \cdot \|_2$ denotes the vector or matrix 2-norm. Moreover, we make use of the notation 
$
    {\mathcal B}(\lambda_\ast, \delta)
$
for the open ball centered at $\lambda_\ast$ with radius $\delta > 0$, that is
\[
       \mathcal B(\lambda_\ast, \delta)
           :=
         \{
               z \in {\mathbb C} \;  | \;   | z - \lambda_\ast | < \delta
         \},
\]
whereas
$
    \overline{\mathcal B}(\lambda_\ast, \delta)
$
denotes the closure of  ${\mathcal B}(\lambda_\ast, \delta)$, that is
the closed ball centered at $\lambda_\ast$ with radius $\delta > 0$, i.\,e.,
\[
        \overline{\mathcal B}(\lambda_\ast, \delta)
           :=
         \{
               z \in {\mathbb C} \;  | \;   | z - \lambda_\ast | \leq \delta
         \}.
\]
By a constant here and in the subsequent arguments in this section, we mean that the scalar
does not depend on $\lambda_j$ for $j = 1,\,\dots,\, \ell$ as well as the subspaces ${\mathcal W}_\ell,\, {\mathcal V}_\ell$.
Rather, it can be expressed fully in terms of the quantities related to the original rational function $R(\cdot)$.
\begin{lemma}\label{thm:anal_prop_svals}
Suppose that Assumption \ref{assume:non_defect} holds, and $\lambda_\ell$ is sufficiently close to the
eigenvalue $\lambda_\ast$ of $R(\cdot)$.
There exist constants $\gamma,\,\delta > 0$ such that $\overline{\mathcal B}(\lambda_\ast,\delta) \subseteq {\mathcal N}(\lambda_\ast)$
satisfying the following assertions:
\begin{enumerate}
	\item[\bf (i)] We have $| \eta_{j}(s) - \eta_{j}(\widehat{s}) | \leq \gamma |s - \widehat{s} |$ and 
		$\big| \eta^{{\mathcal W}_\ell , {\mathcal V}_\ell }_{j}(s) - 
			\eta^{{\mathcal W}_\ell , {\mathcal V}_\ell }_{j}(\widehat{s}) \big| 
											\leq \gamma |s - \widehat{s}|$
		for all $s,\, \widehat{s} \in \overline {\mathcal B}(\lambda_\ast,\delta)$ and for $j = 1,\, \dots,\, \sn$.			
	\item[\bf (ii)] The eigenvalues $\eta_{\sn}(s)$ and $\eta^{{\mathcal W}_\ell, {\mathcal V}_\ell }_{{\sn}}(s)$ are simple
		for all $s \in \overline{\mathcal B}(\lambda_\ast,\delta)$. 
		Hence, the derivatives
		\[
			\frac{\partial \eta_{\sn} ( s )}{\partial s_1 },
			\frac{\partial^2 \eta_{\sn} ( s )}{\partial s_1 \partial s_2 },
			\frac{\partial^3 \eta_{\sn} ( s )}{\partial s_1 \partial s_2 \partial s_3} 
			\;\;		\text{and}		\;\;\;
			\frac{\partial \eta^{{\mathcal W}_\ell , {\mathcal V}_\ell }_{\sn} ( s )}{\partial s_1 },
			\frac{\partial^2 \eta^{{\mathcal W}_\ell , {\mathcal V}_\ell }_{\sn} ( s )}{\partial s_1 \partial s_2 },
			\frac{\partial^3 \eta^{{\mathcal W}_\ell , {\mathcal V}_\ell }_{\sn} ( s )}{\partial s_1 \partial s_2 \partial s_3} 
		\]
		exist for every $s_1,\, s_2,\, s_3 \in \{ \Real (s), \Imag (s) \}$ and 
		for all $s \in {\mathcal B}(\lambda_\ast,\delta)$.
	\item[\bf (iii)] We have
	\[
			\left| \frac{\partial \eta^{{\mathcal W}_\ell , {\mathcal V}_\ell }_{\sn} ( s )}{\partial s_1 }  \right|  \leq \gamma, \quad		
			\left| \frac{\partial^2 \eta^{{\mathcal W}_\ell , {\mathcal V}_\ell }_{\sn} ( s )}{\partial s_1 \partial s_2 }  \right|  \leq \gamma, \quad 
			\left| \frac{\partial^3 \eta^{{\mathcal W}_\ell , {\mathcal V}_\ell }_{\sn} ( s )}{\partial s_1 \partial s_2 \partial s_3}  \right|  \leq \gamma	  
	\]
	for every $s_1,\, s_2,\, s_3 \in \{ \Real (s), \Imag (s) \}$ and for all $s \in \mathcal B(\lambda_\ast,\delta)$.
	\item[\bf (iv)] We have 
	\[
		\left\| \eta'_{\sn}(s) - \eta'_{\sn}(\widehat{s}) \right\|_2 \leq \gamma |s - \widehat{s} |, \quad
		\left\| \big[ \eta^{{\mathcal W}_\ell , {\mathcal V}_\ell }_{\sn} \big]'(s) - \big[ \eta^{{\mathcal W}_\ell , {\mathcal V}_\ell }_{\sn} \big]'(\widehat{s}) \right\|_2 \leq \gamma |s - \widehat{s}|
	\]
	and
	\[
		\left\| \nabla^2 \eta_{\sn}(s) - \nabla^2 \eta_{\sn}(\widehat{s}) \right\|_2 \leq \gamma |s - \widehat{s} |,	\quad
		\left\| \nabla^2 \eta^{{\mathcal W}_\ell , {\mathcal V}_\ell }_{\sn}(s) - \nabla^2 \eta^{{\mathcal W}_\ell , {\mathcal V}_\ell }_{\sn}(\widehat{s}) \right\|_2 \leq \gamma |s - \widehat{s}|
	\]
	for all $s, \widehat{s} \in	{\mathcal B}(\lambda_\ast,\delta)$.
\end{enumerate}
\end{lemma}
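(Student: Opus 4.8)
The plan is to build everything on a single quantitative observation: by Assumption~\ref{assume:non_defect} and \eqref{eq:uniform_non_degeneracy}, on the neighborhood ${\mathcal N}(\lambda_\ast)$ neither $A - sI_\sk$ nor $W_\ell^\ast A V_\ell - s W_\ell^\ast V_\ell$ is singular, with smallest singular value at least $\xi/2$. Hence $R(\cdot)$ and $R^{{\mathcal W}_\ell, {\mathcal V}_\ell}(\cdot)$ are real-analytic there, and, since $V_\ell, W_\ell$ have orthonormal columns, $\|CV_\ell\|_2 \le \|C\|_2$, $\|W_\ell^\ast B\|_2 \le \|B\|_2$, $\|W_\ell^\ast A V_\ell\|_2 \le \|A\|_2$, $\|W_\ell^\ast V_\ell\|_2 \le 1$, and $\|(s W_\ell^\ast V_\ell - W_\ell^\ast A V_\ell)^{-1}\|_2 \le 2/\xi$. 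Differentiating the resolvent and $P(\cdot)$ then shows that $R^{{\mathcal W}_\ell, {\mathcal V}_\ell}(\cdot)$ and all of its derivatives with respect to $\Real(s)$ and $\Imag(s)$ — and therefore $M^{{\mathcal W}_\ell, {\mathcal V}_\ell}(\cdot)$ and all of its partial derivatives — are bounded on any closed ball $\overline{\mathcal B}(\lambda_\ast,\delta_0)\subseteq{\mathcal N}(\lambda_\ast)$ by a constant that depends only on $\xi$, $\|A\|_2$, $\|B\|_2$, $\|C\|_2$, and the $P_j$; in particular not on $\ell$ or on the subspaces. The same holds for $M(\cdot)$. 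This uniform boundedness is the workhorse, and from here the proof follows \cite[Lemmas~A.1 and~A.2]{Aliyev2019} almost verbatim.

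For part~(i), Weyl's inequality bounds $|\eta_j(s)-\eta_j(\widehat s)|$ by $\|M(s)-M(\widehat s)\|_2$ and likewise for the reduced functions, and these are $\le \gamma|s-\widehat s|$ on $\overline{\mathcal B}(\lambda_\ast,\delta_0)$ by the mean value theorem applied to the uniformly bounded first derivatives of $M$ and $M^{{\mathcal W}_\ell,{\mathcal V}_\ell}$. For part~(ii), we first fix the bottom spectral gap: since $\lambda_\ast$ is a simple eigenvalue of $R(\cdot)$ we have $\eta_\sn(\lambda_\ast)=0$ and $g:=\eta_{\sn-1}(\lambda_\ast)/2>0$, so by part~(i) the gap $\eta_{\sn-1}(s)-\eta_\sn(s)$ stays $\ge g$ on $\overline{\mathcal B}(\lambda_\ast,\delta)$ once $\delta \le g/(2\gamma)$. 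For the reduced functions, Theorem~\ref{thm:sval_Hermite_interpolate}(i) gives $\eta^{{\mathcal W}_\ell,{\mathcal V}_\ell}_j(\lambda_\ell)=\eta_j(\lambda_\ell)$; since $\lambda_\ell$ is close to $\lambda_\ast$ and $\eta_j$ is Lipschitz, the reduced gap is $\ge g$ at $\lambda_\ell$, and applying part~(i) to the reduced functions with the \emph{same} $\gamma$ keeps it $\ge g/2$ on $\overline{\mathcal B}(\lambda_\ast,\delta)$ after shrinking $\delta$ further and requiring $|\lambda_\ell-\lambda_\ast|$ small enough. A simple eigenvalue of a real-analytic Hermitian family is real-analytic, so $\eta_\sn$ and $\eta^{{\mathcal W}_\ell,{\mathcal V}_\ell}_\sn$ admit partial derivatives of every order — in particular up to order three — on ${\mathcal B}(\lambda_\ast,\delta)$.

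For part~(iii), differentiate the simple eigenvalue $\eta^{{\mathcal W}_\ell,{\mathcal V}_\ell}_\sn$ once, twice, and thrice using the classical first- and second-order perturbation formulas for a simple Hermitian eigenvalue \cite{Lancaster1964}; the resulting expressions involve only the partial derivatives of $M^{{\mathcal W}_\ell,{\mathcal V}_\ell}$ up to order three and the reduced resolvent restricted to the orthogonal complement of the corresponding eigenvector, whose norm is at most $2/g$ by the gap of part~(ii). Both ingredients are uniformly bounded, so the three derivative bounds hold with a single $\gamma$; the identical argument (with $R(\cdot)$ fixed, hence trivially uniform) bounds the first three derivatives of $\eta_\sn$, which part~(iv) needs. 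For part~(iv), on the convex open ball ${\mathcal B}(\lambda_\ast,\delta)$ the functions $\eta_\sn$ and $\eta^{{\mathcal W}_\ell,{\mathcal V}_\ell}_\sn$ have continuous, uniformly bounded partial derivatives up to order three; integrating the third derivatives along the segment joining $s$ and $\widehat s$ shows $\nabla^2\eta_\sn$ and $\nabla^2\eta^{{\mathcal W}_\ell,{\mathcal V}_\ell}_\sn$ are Lipschitz with a uniform constant, and integrating the second derivatives shows $\eta'_\sn$ and $[\eta^{{\mathcal W}_\ell,{\mathcal V}_\ell}_\sn]'$ are Lipschitz likewise. Finally, take $\gamma$ to be the maximum, and $\delta$ the minimum, of the finitely many constants and radii produced along the way.

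The main obstacle is purely one of bookkeeping about uniformity: every constant and the radius $\delta$ must be independent of $\ell$ and of ${\mathcal W}_\ell,{\mathcal V}_\ell$. This forces two things that are not automatic — deriving the boundedness of $M^{{\mathcal W}_\ell,{\mathcal V}_\ell}$ and its derivatives purely from $\xi$, $\|A\|_2$, $\|B\|_2$, $\|C\|_2$, and the $P_j$ via orthonormality of $V_\ell,W_\ell$, and producing the reduced bottom gap in part~(ii) from the interpolation identity at $\lambda_\ell$ together with $\lambda_\ell\to\lambda_\ast$ and the already-uniform Lipschitz constant of part~(i), rather than from any property of the particular subspaces. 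Once uniformity is secured, the differential-calculus steps are entirely standard and mirror the corresponding arguments in \cite{Aliyev2019}.
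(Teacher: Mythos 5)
Your proof is correct and takes the route the paper intends. The paper itself gives no argument for this lemma: it points to \cite[Lemmas~A.1 and A.2]{Aliyev2019} for parts~\textbf{(i)}--\textbf{(iii)} and says \textbf{(iv)} is immediate from \textbf{(ii)} and \textbf{(iii)}. Your write-up is the expected adaptation of that chain --- the orthonormality of $V_\ell,W_\ell$ together with Assumption~\ref{assume:non_defect} and \eqref{eq:uniform_non_degeneracy} yield bounds on $M^{{\mathcal W}_\ell,{\mathcal V}_\ell}$ and its partial derivatives that are uniform in $\ell$ and in the subspaces, Weyl's inequality gives \textbf{(i)}, transferring the spectral gap at $\lambda_\ast$ to the reduced problem via the interpolation identity $\eta_j^{{\mathcal W}_\ell,{\mathcal V}_\ell}(\lambda_\ell)=\eta_j(\lambda_\ell)$ from Theorem~\ref{thm:sval_Hermite_interpolate}~\textbf{(i)} gives \textbf{(ii)}, the simple-Hermitian-eigenvalue derivative formulas bounded through the gap give \textbf{(iii)}, and integrating the uniformly bounded second and third derivatives gives \textbf{(iv)}. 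You also correctly flag the one point the paper's ``immediate from (ii) and (iii)'' glosses over: part~\textbf{(iii)} only bounds the derivatives of the reduced eigenvalue, so \textbf{(iv)}'s claims about $\eta_\sn$ additionally need the (trivially $\ell$-independent) bounds on the derivatives of the fixed function $\eta_\sn$, which your proof supplies.
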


\subsection{Convergence Properties}\label{sec:convergence_analysis}
In practice, we observe that Algorithm \ref{alg:SM} nearly always converges to the eigenvalue of $R(\cdot)$ closest to 
the target point $\tau$. Here, we consider two consecutive iterates $\lambda_\ell$, $\lambda_{\ell+1}$ of this 
subspace method, which we assume close to an eigenvalue $\lambda_\ast$ of $R(\cdot)$. Then we prove 
\begin{equation}\label{eq:quad_conv_init}
        | \lambda_{\ell+1} - \lambda_\ast |  \leq  C | \lambda_\ell - \lambda_\ast |^2
\end{equation}
for some constant $C > 0$.   The closeness of $\lambda_\ell$, $\lambda_{\ell+1}$ to $\lambda_\ast$ is a silent
assumption that is kept throughout, even though it is not explicitly stated.
In addition, we deduce the bound \eqref{eq:quad_conv_init} under Assumption~\ref{assume:non_defect}, 
as well as the following assumption.
\begin{assumption}[Non-degeneracy]\label{assume:non_degeneracy}
The Hessian $\nabla^2 \eta_{\sn}(\lambda_\ast)$ is invertible.
\end{assumption}

The main quadratic convergence result relies on the non-singularity of the Hessian of
$\eta^{{\mathcal W}_\ell, {\mathcal V}_\ell}_{\sn}  (\cdot)$ in a ball centered around $\lambda_\ast$.
This is stated formally and proven next.
\begin{lemma}[Uniform non-singularity of the Hessian]\label{lemma:boundedness}
Suppose that Assumptions~\ref{assume:non_defect} and \ref{assume:non_degeneracy} hold. 
Then there exist constants $\alpha,\,\delta > 0$ such that
\begin{equation}\label{eq:bounded_sval}
	\sigma_{\min}( \nabla^2 \eta^{{\mathcal W}_\ell, {\mathcal V}_\ell}_{\sn}  (s) )
          \geq  \alpha
	\quad \forall  s \in {\mathcal B}(\lambda_\ast,\delta).
\end{equation}
\end{lemma}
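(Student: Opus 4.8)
The plan is to obtain \eqref{eq:bounded_sval} by combining the Hermite interpolation of the Hessians at $\lambda_\ell$ (Theorem~\ref{thm:sval_Hermite_interpolate}(ii)), the uniform Lipschitz bounds on the Hessians (Lemma~\ref{thm:anal_prop_svals}(iv)), and the invertibility of $\nabla^2 \eta_{\sn}(\lambda_\ast)$ (Assumption~\ref{assume:non_degeneracy}), via a perturbation argument. First I would fix $\alpha_\ast := \sigma_{\min}(\nabla^2 \eta_{\sn}(\lambda_\ast)) > 0$, which is positive by Assumption~\ref{assume:non_degeneracy}. Let $\gamma,\,\delta_0 > 0$ be the constants supplied by Lemma~\ref{thm:anal_prop_svals} (applied on a ball $\overline{\mathcal B}(\lambda_\ast,\delta_0) \subseteq {\mathcal N}(\lambda_\ast)$), so that in particular $\nabla^2 \eta_{\sn}(\cdot)$ and $\nabla^2 \eta^{{\mathcal W}_\ell,{\mathcal V}_\ell}_{\sn}(\cdot)$ are $\gamma$-Lipschitz on ${\mathcal B}(\lambda_\ast,\delta_0)$, with $\gamma$ a constant independent of $\lambda_\ell$ and of the subspaces.

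The core estimate is a triangle-inequality decomposition. For any $s \in {\mathcal B}(\lambda_\ast,\delta)$ with $\delta \le \delta_0$ to be chosen, write
\begin{align*}
	\left\| \nabla^2 \eta^{{\mathcal W}_\ell,{\mathcal V}_\ell}_{\sn}(s) - \nabla^2 \eta_{\sn}(\lambda_\ast) \right\|_2
	& \le \left\| \nabla^2 \eta^{{\mathcal W}_\ell,{\mathcal V}_\ell}_{\sn}(s) - \nabla^2 \eta^{{\mathcal W}_\ell,{\mathcal V}_\ell}_{\sn}(\lambda_\ell) \right\|_2 \\
	& \quad {} + \left\| \nabla^2 \eta^{{\mathcal W}_\ell,{\mathcal V}_\ell}_{\sn}(\lambda_\ell) - \nabla^2 \eta_{\sn}(\lambda_\ell) \right\|_2 \\
	& \quad {} + \left\| \nabla^2 \eta_{\sn}(\lambda_\ell) - \nabla^2 \eta_{\sn}(\lambda_\ast) \right\|_2 .
\end{align*}
The middle term vanishes identically: taking $k = \ell$ in Theorem~\ref{thm:sval_Hermite_interpolate}(ii) gives $\nabla^2 \eta^{{\mathcal W}_\ell,{\mathcal V}_\ell}_{\sn}(\lambda_\ell) = \nabla^2 \eta_{\sn}(\lambda_\ell)$ (here we use that $\eta_{\sn}(\lambda_\ell)$ is simple, which holds by Lemma~\ref{thm:anal_prop_svals}(ii) once $\lambda_\ell$ is close enough to $\lambda_\ast$). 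The first and third terms are bounded using Lemma~\ref{thm:anal_prop_svals}(iv) by $\gamma |s - \lambda_\ell|$ and $\gamma |\lambda_\ell - \lambda_\ast|$, respectively. Since $|s - \lambda_\ell| \le |s - \lambda_\ast| + |\lambda_\ast - \lambda_\ell| < \delta + |\lambda_\ell - \lambda_\ast|$, the whole right-hand side is at most $\gamma(\delta + 2|\lambda_\ell - \lambda_\ast|)$. Now choose $\delta > 0$ small enough that $\gamma\delta \le \alpha_\ast/4$, and invoke the standing closeness assumption on $\lambda_\ell$ to ensure $\gamma|\lambda_\ell - \lambda_\ast| \le \alpha_\ast/8$; then $\|\nabla^2 \eta^{{\mathcal W}_\ell,{\mathcal V}_\ell}_{\sn}(s) - \nabla^2 \eta_{\sn}(\lambda_\ast)\|_2 \le \alpha_\ast/2$ for all $s \in {\mathcal B}(\lambda_\ast,\delta)$.

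Finally, by the standard perturbation bound for singular values (Weyl's inequality for $\sigma_{\min}$), $\sigma_{\min}(\nabla^2 \eta^{{\mathcal W}_\ell,{\mathcal V}_\ell}_{\sn}(s)) \ge \sigma_{\min}(\nabla^2 \eta_{\sn}(\lambda_\ast)) - \|\nabla^2 \eta^{{\mathcal W}_\ell,{\mathcal V}_\ell}_{\sn}(s) - \nabla^2 \eta_{\sn}(\lambda_\ast)\|_2 \ge \alpha_\ast - \alpha_\ast/2 = \alpha_\ast/2$, so \eqref{eq:bounded_sval} holds with $\alpha := \alpha_\ast/2$. I would close by noting that $\alpha$ and $\delta$ depend only on $\alpha_\ast$ and $\gamma$, hence only on quantities intrinsic to $R(\cdot)$ and not on $\lambda_\ell$ or the subspaces — matching the paper's notion of "constant". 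The main obstacle, and the only subtle point, is the interplay of quantifiers: one must verify that the neighborhood ${\mathcal N}(\lambda_\ast)$ and the constant $\gamma$ from Lemma~\ref{thm:anal_prop_svals} are genuinely uniform over all admissible subspaces (so that the Lipschitz bound in the first term above does not silently depend on $\ell$), and that "$\lambda_\ell$ sufficiently close to $\lambda_\ast$" can be imposed quantitatively — but both are already guaranteed by the hypotheses of Lemma~\ref{thm:anal_prop_svals} together with the standing closeness assumption stated at the start of Section~\ref{sec:convergence_analysis}.
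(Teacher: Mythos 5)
Your proof is correct and follows essentially the same route as the paper's: both anchor at $\sigma_{\min}(\nabla^2\eta_{\sn}(\lambda_\ast))$, transfer to the reduced Hessian via the Hermite interpolation equality $\nabla^2\eta^{{\mathcal W}_\ell,{\mathcal V}_\ell}_{\sn}(\lambda_\ell)=\nabla^2\eta_{\sn}(\lambda_\ell)$ from Theorem~\ref{thm:sval_Hermite_interpolate}(ii), propagate via the Lipschitz bounds of Lemma~\ref{thm:anal_prop_svals}(iv), and close with Weyl's singular-value perturbation bound. The only difference is bookkeeping — you collapse the argument into one three-term triangle inequality and a single application of Weyl, whereas the paper first establishes the lower bound $\beta/2$ on $\sigma_{\min}(\nabla^2\eta^{{\mathcal W}_\ell,{\mathcal V}_\ell}_{\sn}(\lambda_\ell))$ and then extends it to the ball by Lipschitz continuity of the reduced Hessian.
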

\begin{proof}
Let $\beta := \sigma_{\min}( \nabla^2 \eta_{\sn}  (\lambda_\ast) )  >  0$. By the Lipschitz
continuity of $\nabla^2 \eta_{\sn}  (\cdot)$ around $\lambda_\ast$ (which follows from part \textbf{(iv)} of Lemma~\ref{thm:anal_prop_svals}), 
there exists a $\widehat{\delta} > 0$ such that $\sigma_{\min}( \nabla^2 \eta_{\sn}  (s) )$ $\geq \beta/2$ for all 
$s \in  {\mathcal B}\big(\lambda_\ast,\widehat{\delta}\big)$. Without loss of generality, we may also assume that
$\nabla^2 \eta_{\sn}^{{\mathcal W}_\ell, {\mathcal V}_\ell}  (\cdot)$ is Lipschitz continuous
in ${\mathcal B}\big(\lambda_\ast,\widehat{\delta}\big)$ with the Lipschitz constant $\gamma$
(once again due to part \textbf{(iv)} of Lemma~\ref{thm:anal_prop_svals}).

Setting $\delta := \min \big\{ \beta / (8 \gamma) , \widehat{\delta} \big\}$, we additionally assume, 
without loss of generality, that $\lambda_\ell \in  {\mathcal B}(\lambda_\ast, \delta)$. 
But then the Hermite interpolation property, 
specifically part \textbf{(ii)} of Theorem \ref{thm:sval_Hermite_interpolate}, implies
\[
		\sigma_{\min}\big( \nabla^2 \eta_{\sn}^{{\mathcal W}_\ell, {\mathcal V}_\ell}  (\lambda_\ell) \big)		=
		 \sigma_{\min}( \nabla^2 \eta_{\sn}  (\lambda_\ell) ) 		\geq 		\beta/2.
\]
Moreover,
\begin{equation*}
\begin{split}
	\big| 
		\sigma_{\min} \big( \nabla^2 \eta_{\sn}^{{\mathcal W}_\ell, {\mathcal V}_\ell}  (\lambda_\ell) \big)
					-
		\sigma_{\min}\big( \nabla^2 \eta_{\sn}^{{\mathcal W}_\ell, {\mathcal V}_\ell}  (s) \big)
	\big|
		&  \leq
	\left\|
		\nabla^2 \eta_{\sn}^{{\mathcal W}_\ell, {\mathcal V}_\ell}  (\lambda_\ell)  -   \nabla^2 \eta_{\sn}^{{\mathcal W}_\ell, {\mathcal V}_\ell}  (s)		
	\right\|_2		\\
		&  \leq			\gamma | \lambda_\ell - s | 	\leq		\beta/4
	\quad 	
\end{split}	
\end{equation*}
for all $s \in \overline{\mathcal B}(\lambda_\ast, \delta)$,
where the first inequality follows from Weyl's theorem \cite[Theorem 4.3.1]{Horn1985}, whereas the second inequality is due to 
the Lipschitz continuity of $\nabla^2 \eta_{\sn}^{{\mathcal W}_\ell, {\mathcal V}_\ell}  (\cdot)$. Hence, we deduce
$\sigma_{\min}\big( \nabla^2 \eta_{\sn}^{{\mathcal W}_\ell, {\mathcal V}_\ell}  (s) \big) \geq \beta/4 =: \alpha$
for all $s \in {\mathcal B}(\lambda_\ast, \delta)$ as desired.
\end{proof}

Now we are ready to present the main quadratic convergence result, where the notation
${\mathcal R}^2 : {\mathbb C} \rightarrow {\mathbb R}^2$ refers to the linear map defined by
$
       {\mathcal R}^2(z) := \begin{bmatrix} \Real (z), \; \Imag (z) \end{bmatrix}.
$ 
\begin{theorem}[Quadratic convergence]\label{thm:quad_conv}
Suppose that Assumptions \ref{assume:non_defect} and \ref{assume:non_degeneracy} hold. Then for the iterates of Algorithm~\ref{alg:SM} with $\sq \geq 2$, 
there exists a constant $C>0$ such that
\eqref{eq:quad_conv_init} is satisfied.
\end{theorem}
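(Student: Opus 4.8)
The strategy is to treat $\lambda_{\ell+1}$ as an approximate critical point of the reduced singular value function $\eta^{{\mathcal W}_\ell,{\mathcal V}_\ell}_{\sn}(\cdot)$ and $\lambda_\ast$ as an exact critical point of the full singular value function $\eta_{\sn}(\cdot)$, then exploit the Hermite interpolation at $\lambda_\ell$ to transfer information between the two. First I would observe that, since $\lambda_\ast$ is an eigenvalue of $R(\cdot)$ and a simple one with $\eta_{\sn}(\lambda_\ast)=0$, it is a global minimizer of $\eta_{\sn}(\cdot)$, so $\eta'_{\sn}(\lambda_\ast)=0$ (viewing $\eta_{\sn}$ as a real-valued function of $(\Real s,\Imag s)$). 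Likewise, by the way $\lambda_{\ell+1}$ is selected in line~\ref{inter_point} of Algorithm~\ref{alg:SM} as an eigenvalue of the reduced pencil, it is an eigenvalue of $R^{{\mathcal W}_\ell,{\mathcal V}_\ell}(\cdot)$, hence $\eta^{{\mathcal W}_\ell,{\mathcal V}_\ell}_{\sn}(\lambda_{\ell+1})=0$ and $\big[\eta^{{\mathcal W}_\ell,{\mathcal V}_\ell}_{\sn}\big]'(\lambda_{\ell+1})=0$. So both points are stationary points of their respective functions.

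Next I would set up a Taylor expansion of $\big[\eta^{{\mathcal W}_\ell,{\mathcal V}_\ell}_{\sn}\big]'$ about $\lambda_\ell$, evaluated at $\lambda_{\ell+1}$, using that the reduced function is $C^3$ near $\lambda_\ast$ with derivatives bounded by $\gamma$ (parts~(ii)–(iv) of Lemma~\ref{thm:anal_prop_svals}): writing things through the map ${\mathcal R}^2$,
\[
	0 = \big[\eta^{{\mathcal W}_\ell,{\mathcal V}_\ell}_{\sn}\big]'(\lambda_{\ell+1})^\top
		= \big[\eta^{{\mathcal W}_\ell,{\mathcal V}_\ell}_{\sn}\big]'(\lambda_\ell)^\top
		 + \nabla^2\eta^{{\mathcal W}_\ell,{\mathcal V}_\ell}_{\sn}(\lambda_\ell)\,{\mathcal R}^2(\lambda_{\ell+1}-\lambda_\ell)
		 + O\big(|\lambda_{\ell+1}-\lambda_\ell|^2\big).
\]
Now the Hermite interpolation at $\lambda_\ell$ — part~(ii) of Theorem~\ref{thm:sval_Hermite_interpolate} — lets me replace $\big[\eta^{{\mathcal W}_\ell,{\mathcal V}_\ell}_{\sn}\big]'(\lambda_\ell)$ by $\eta'_{\sn}(\lambda_\ell)$ and $\nabla^2\eta^{{\mathcal W}_\ell,{\mathcal V}_\ell}_{\sn}(\lambda_\ell)$ by $\nabla^2\eta_{\sn}(\lambda_\ell)$, both \emph{full} quantities. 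Combining with the expansion of the full gradient $\eta'_{\sn}(\lambda_\ell)$ about $\lambda_\ast$, where $\eta'_{\sn}(\lambda_\ast)=0$, gives
\[
	\nabla^2\eta_{\sn}(\lambda_\ell)\,{\mathcal R}^2(\lambda_{\ell+1}-\lambda_\ast)
		= O\big(|\lambda_\ell-\lambda_\ast|^2\big) + O\big(|\lambda_{\ell+1}-\lambda_\ell|^2\big),
\]
after also absorbing the difference between $\nabla^2\eta_{\sn}(\lambda_\ell)$ and $\nabla^2\eta_{\sn}(\lambda_\ast)$ via Lipschitz continuity (part~(iv) of Lemma~\ref{thm:anal_prop_svals}) and a bound of the form $|\lambda_{\ell+1}-\lambda_\ell|\le |\lambda_{\ell+1}-\lambda_\ast|+|\lambda_\ell-\lambda_\ast|$. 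Finally, Lemma~\ref{lemma:boundedness} gives $\sigma_{\min}\big(\nabla^2\eta^{{\mathcal W}_\ell,{\mathcal V}_\ell}_{\sn}(\lambda_\ell)\big)\ge\alpha$, and via the Hermite match this is $\sigma_{\min}(\nabla^2\eta_{\sn}(\lambda_\ell))\ge\alpha$, so the Hessian on the left is invertible with a uniformly bounded inverse; inverting and rearranging yields $|\lambda_{\ell+1}-\lambda_\ast|\le C|\lambda_\ell-\lambda_\ast|^2 + C'|\lambda_{\ell+1}-\lambda_\ast|^2$, and for $\lambda_{\ell+1}$ close enough to $\lambda_\ast$ the last term is absorbed on the left, giving \eqref{eq:quad_conv_init}.

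The main obstacle I anticipate is making the "absorb $O(|\lambda_{\ell+1}-\lambda_\ell|^2)$ into $O(|\lambda_{\ell+1}-\lambda_\ast|^2)$" step fully rigorous without circularity: at the point of the expansion we do not yet know $\lambda_{\ell+1}$ is close to $\lambda_\ast$, only that it is the reduced eigenvalue nearest $\tau$. This requires a preliminary argument — presumably a continuity/compactness argument on $\overline{\mathcal B}(\lambda_\ast,\delta)$ using that $\eta^{{\mathcal W}_\ell,{\mathcal V}_\ell}_{\sn}$ interpolates $\eta_{\sn}$ to high order at $\lambda_\ell$ and that $\eta_{\sn}$ has an isolated zero at $\lambda_\ast$ — to first establish $|\lambda_{\ell+1}-\lambda_\ast| = O(|\lambda_\ell-\lambda_\ast|)$ (a linear bound), and only then bootstrap to the quadratic estimate. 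Care is also needed in handling the $O(\cdot)$ remainders in complex vs.\ real-pair notation (the ${\mathcal R}^2$ bookkeeping) and in checking that all constants are indeed "constants" in the sense defined before Lemma~\ref{thm:anal_prop_svals}, i.e.\ independent of $\ell$ and of the subspaces; this is exactly what Lemmas~\ref{thm:anal_prop_svals} and~\ref{lemma:boundedness} were set up to guarantee, so the bookkeeping should go through.
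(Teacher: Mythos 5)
Your proof follows the same route as the paper's: treat $\lambda_\ast$ and $\lambda_{\ell+1}$ as stationary points of $\eta_{\sn}$ and $\eta^{{\mathcal W}_\ell,{\mathcal V}_\ell}_{\sn}$ respectively, Taylor-expand both gradients about $\lambda_\ell$, use the Hermite match from Theorem~\ref{thm:sval_Hermite_interpolate}(ii) to equate the first and second derivatives there, invert the uniformly nonsingular Hessian from Lemma~\ref{lemma:boundedness}, and absorb the remainder via $|\lambda_{\ell+1}-\lambda_\ell|^2 \le 2\big(|\lambda_{\ell+1}-\lambda_\ast|^2+|\lambda_\ell-\lambda_\ast|^2\big)$. (The paper uses an integral-remainder form where you write $O(\cdot)$, but that is cosmetic.)

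The one place you diverge is the closing paragraph: you flag a possible circularity in assuming $\lambda_{\ell+1}$ is already near $\lambda_\ast$, and propose to resolve it with a preliminary linear-rate bound. In fact the paper does not attempt this; it explicitly declares, just before Assumption~\ref{assume:non_degeneracy}, that ``the closeness of $\lambda_\ell$, $\lambda_{\ell+1}$ to $\lambda_\ast$ is a silent assumption that is kept throughout,'' and the first line of the proof adds ``without loss of generality, assume that $\lambda_\ell,\,\lambda_{\ell+1}\in{\mathcal B}(\lambda_\ast,\delta)$.'' So the theorem as written is a local \emph{a priori} estimate conditioned on both iterates lying in the ball; no bootstrap is needed. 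Your proposed preliminary argument would strengthen the conclusion to requiring only $\lambda_\ell$ near $\lambda_\ast$, which is a legitimate refinement but not what the paper proves.
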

\begin{proof}
Let $\delta$ be such that the assertions of Lemmas \ref{thm:anal_prop_svals} and \ref{lemma:boundedness}
hold in the ball ${\mathcal B}(\lambda_\ast,\delta)$.
In particular, the eigenvalues $\eta_{\sn}(\cdot)$ and $\eta_{\sn}^{{\mathcal W}_\ell, {\mathcal V}_\ell}(\cdot)$ are simple,
$\nabla^2 \eta_{\sn}(\cdot)$ and $\nabla^2 \eta_{\sn}^{{\mathcal W}_\ell, {\mathcal V}_\ell}(\cdot)$
are Lipschitz continuous with Lipschitz constant $\gamma > 0$, and the lower bound \eqref{eq:bounded_sval} is satisfied 
for some constant $\alpha > 0$ in ${\mathcal B}(\lambda_\ast,\delta)$.
Without loss of generality, assume that $\lambda_\ell,\, \lambda_{\ell+1} \in {\mathcal B}(\lambda_\ast,\delta)$.

The iterate $\lambda_{\ell+1}$, by definition, is an eigenvalue of $R^{ {\mathcal W}_\ell, {\mathcal V}_\ell }(\cdot)$,
hence we have $\eta^{{\mathcal W}_\ell , {\mathcal V}_\ell }_{\sn}(\lambda_{\ell+1}) = 0$.
Indeed, $\lambda_{\ell+1}$ is a smooth global minimizer of $\eta^{{\mathcal W}_\ell , {\mathcal V}_\ell }_{\sn}(\cdot)$
(i.\,e., the smoothness follows from part \textbf{(ii)} of Lemma \ref{thm:anal_prop_svals}),
implying also $\big[ \eta^{{\mathcal W}_\ell , {\mathcal V}_\ell }_{\sn} \big]'(\lambda_{\ell+1}) = 0$.

By employing the Lipschitz continuity of $\nabla^2 \eta_{\sn}(\cdot)$ in ${\mathcal B}(\lambda_\ast,\delta)$, we have
\[
        0 = \eta_{\sn}'(\lambda_\ast)  =   
          			\eta_{\sn}'(\lambda_\ell)
   +		\int_0^1  {\mathcal R}^2( \lambda_\ast - \lambda_\ell)  \nabla^2 \eta_{\sn}(\lambda_\ell + t (\lambda_\ast - \lambda_\ell)) \: {\mathrm d}t,
\]
which, by exploiting $\nabla^2 \eta_{\sn}(\lambda_\ell) = \nabla^2 \eta^{{\mathcal W}_\ell, {\mathcal V}_\ell}_{\sn}  (\lambda_\ell)$
(see part \textbf{(ii)} of Theorem \ref{thm:sval_Hermite_interpolate}), could be arranged to
\begin{multline}\label{eq:intermed}
    0  = \eta'_{\sn}(\lambda_\ell)  +  
             {\mathcal R}^2 (\lambda_\ast -  \lambda_\ell) \nabla^2 \eta^{{\mathcal W}_\ell, {\mathcal V}_\ell}_{\sn}  (\lambda_\ell) \\
          +    \int_0^1 {\mathcal R}^2( \lambda_\ast - \lambda_\ell)  
          \left( \nabla^2 \eta_{\sn}(\lambda_\ell + t (\lambda_\ast - \lambda_\ell)) - \nabla^2 \eta_{\sn}(\lambda_\ell) \right)  \: {\mathrm d}t.
\end{multline}
Moreover, by a Taylor expansion of $\eta^{{\mathcal W}_\ell, {\mathcal V}_\ell}_{\sn} (\cdot)$ about $\lambda_\ell$, we obtain
\[
     0 =   \big[ \eta^{{\mathcal W}_\ell , {\mathcal V}_\ell }_{\sn} \big]'(\lambda_{\ell+1})  =   
                \big[ \eta^{{\mathcal W}_\ell , {\mathcal V}_\ell }_{\sn} \big]'(\lambda_{\ell})  + 
                      {\mathcal R}^2 (\lambda_{\ell+1} -  \lambda_\ell) \nabla^2 \eta^{{\mathcal W}_\ell, {\mathcal V}_\ell}_{\sn} (\lambda_\ell)
                    + {\mathcal O} \big( | \lambda_{\ell+1} -  \lambda_\ell |^2 \big),
\]
which, combined with $\big[ \eta^{{\mathcal W}_\ell , {\mathcal V}_\ell }_{\sn} \big]'(\lambda_{\ell}) = \eta'_{\sn}(\lambda_\ell)$
(again due to part \textbf{(ii)} of Theorem \ref{thm:sval_Hermite_interpolate}), give rise to 
\begin{multline}\label{intermed2}
      \eta'_{\sn}(\lambda_\ell)  +  
      		 {\mathcal R}^2 (\lambda_\ast -  \lambda_\ell) \nabla^2 \eta^{{\mathcal W}_\ell, {\mathcal V}_\ell}_{\sn} (\lambda_\ell)  
           \\ = 
     	{\mathcal R}^2 (\lambda_\ast -  \lambda_{\ell+1})\nabla^2 \eta^{{\mathcal W}_\ell, {\mathcal V}_\ell}_{\sn}  (\lambda_\ell) 
        +  {\mathcal O} \big( | \lambda_{\ell+1} -  \lambda_\ell |^2 \big).
\end{multline}

In \eqref{eq:intermed}, by plugging the right-hand side of \eqref{intermed2}, 
then exploiting the Lipschitz continuity of $\nabla^2 \eta_{\sn}(\cdot)$, and 
taking the norm, we deduce
\begin{equation}\label{eq:intermed3}
    \left\|
      		{\mathcal R}^2 (\lambda_\ast -  \lambda_{\ell+1}) \nabla^2 \eta^{{\mathcal W}_\ell, {\mathcal V}_\ell}_{\sn}  (\lambda_\ell)
   \right\|_2
          \leq 
     \frac{\gamma}{2}  | \lambda_{\ell}  -  \lambda_\ast |^2  +  {\mathcal O} \big( | \lambda_{\ell+1} -  \lambda_\ell |^2 \big),
\end{equation}
where $\gamma$ is the Lipschitz constant for $\nabla^2 \eta_{\sn}(\cdot)$.
Finally, by employing  
$$ 
     \sigma_{\min}\big( \nabla^2 \eta^{{\mathcal W}_\ell, {\mathcal V}_\ell}_{\sn}  (\lambda_\ell) \big)
          \geq  \alpha
$$
in (\ref{eq:intermed3})
(which is implied by Lemma \ref{lemma:boundedness}) for $\alpha > 0$, we obtain
\[
   	\alpha | \lambda_{\ell+1} - \lambda_\ast |
        \leq 
     \frac{\gamma}{2}  | \lambda_{\ell}  -  \lambda_\ast |^2  +  {\mathcal O} \big( | \lambda_{\ell+1} -  \lambda_\ell |^2 \big).
\]
The desired inequality (\ref{eq:quad_conv_init}) is now immediate from
$| \lambda_{\ell+1} -  \lambda_\ell |^2
				\leq
	2 \left(   | \lambda_{\ell+1} -  \lambda_\ast |^2  +  \right.$
	$\left. | \lambda_{\ell} -  \lambda_\ast |^2   \right)$.
\end{proof}

\section{General Nonlinear Eigenvalue Problem Setting}\label{sec:NEP}
Inspired by the ideas of the previous section for rational eigenvalue problems,
we present a subspace framework for the more general setting of a 
nonlinear eigenvalue problem of the form \eqref{eq:NEP_int}.
Let us consider $T(\cdot)$ and $T_j$ for $j = 1,\, \dots,\, \kappa$ in \eqref{eq:NEP_int} and \eqref{eq:NEP_int0} 
in the partitioned forms
\begin{equation}\label{eq:partition_T}
	T(s) =
		\begin{bmatrix}
			A(s)	&	B(s)		\\
			C(s)	&	D(s)		\\
		\end{bmatrix} \quad \text{and} \quad
	T_j =
		\begin{bmatrix}
			A_j	&	B_j		\\
			C_j	&	D_j		\\
		\end{bmatrix},
\end{equation}
where $A(s),\, A_j \in {\mathbb C}^{{\sk} \times {\sk}}$, $B(s),\, B_j	\in {\mathbb C}^{{\sk} \times {\sm}}$,
$C(s),\, C_j \in {\mathbb C}^{{\sm} \times {\sk}}$, $D(s),\, D_j \in {\mathbb C}^{{\sm} \times {\sm}}$ for all $s\in \C$ such
that ${\sk} + {\sm} = {\sn}$ and ${\sk} \gg {\sm}$. It is a simple exercise to deduce that every finite eigenvalue 
$\lambda \in {\mathbb C}$ of $T(\cdot)$ that is not an eigenvalue of $A(\cdot)$, is also an eigenvalue of the function
\[
	{\mathcal R}(s)  :=  C(s) A(s)^{-1} B(s)  -  D(s).
\]
Conversely, every finite eigenvalue of ${\mathcal R}(\cdot)$ is an eigenvalue of $T(\cdot)$.

Similar to the rational eigenvalue problem setting, the large-scale nature of ${\mathcal R}(\cdot)$ is hidden 
in the middle factor $A(\cdot)$. Hence, we define the reduced matrix-valued function 
corresponding to ${\mathcal R}(\cdot)$ by
\[
	{\mathcal R}^{{\mathcal W}, {\mathcal V}}(s) \; := \; C^V(s) A^{W,V}(s)^{-1} B^W(s)  -  D(s)
\]
in terms of two subspaces ${\mathcal W},\, {\mathcal V} \subseteq \C^\sk$ of equal dimension, say ${\sr} \ll \sk$, and matrices $W,\,V$ 
whose columns form orthonormal bases for them, where
\begin{align}\label{eq:NEP_defnABC_red}
  \begin{split}
	A^{W,V}(s)  &:= W^\ast A(s) V  =   f_1(s) (W^\ast A_1 V) + \dots + f_\kappa(s) (W^\ast A_\kappa V), \\
	B^W(s)  &:= W^\ast B(s)  =   f_1(s) (W^\ast B_1) + \dots + f_\kappa(s) (W^\ast B_\kappa), \quad \text{and} \\
	C^V(s)   &:= C(s) V  = f_1(s) (C_1 V) + \dots + f_\kappa(s) (C_\kappa V).
 \end{split}	
\end{align}	 
The middle factor $A^{W,V}(\cdot)$ of the reduced matrix-valued function is of dimension ${\sr} \times {\sr}$ and much smaller than $A(\cdot)$.

Again, we benefit from the optimization point of view, that is we consider the minimization problem
\begin{equation}\label{eq:NEP_opt}
			\min_{\lambda \in {\mathbb C}}  \sigma_{\min}({\mathcal R}(\lambda)).
\end{equation}
In particular, assuming that the spectra of $A(\cdot)$ and $T(\cdot)$ are disjoint, the eigenvalue of $T(\cdot)$ closest to a prescribed target 
$\tau \in {\mathbb C}$ is the global minimizer of the optimization problem above closest to $\tau$. At every subspace iteration, 
instead of \eqref{eq:NEP_opt}, we solve
\begin{equation}\label{eq:NEP_optr}
			\min_{\lambda \in {\mathbb C}}  \sigma_{\min}\big({\mathcal R}^{{\mathcal W}, {\mathcal V}}(\lambda)\big).
\end{equation}
Specifically, we determine the global minimizer of ${\mathcal R}^{{\mathcal W}, {\mathcal V}}(\cdot)$
closest to the prescribed target $\tau$. The eigenvalues of ${\mathcal R}^{{\mathcal W}, {\mathcal V}}(\cdot)$ are the same as those of the function
\begin{equation}\label{eq:red_NEP}
	T^{W,V}(s) :=
		\begin{bmatrix}
			A^{W,V}(s)	&	B^W (s)		\\
			C^V (s)		&	D(s)		\\
		\end{bmatrix}
			=
		\begin{bmatrix}
			W^\ast	&	0		\\
			0		&	I_{\sm}	\\
		\end{bmatrix}
			T(s)
		\begin{bmatrix}
			V		&	0		\\
			0		&	I_{\sm}	\\
		\end{bmatrix}
		,
\end{equation}
except possibly those that are the eigenvalues of  $A^{W,V}(\cdot)$. Hence,
to retrieve the global minimizer $\widetilde{\lambda}$ of $\sigma_{\min}\big({\mathcal R}^{{\mathcal W}, {\mathcal V}}(\cdot)\big)$ closest to $\tau$, we find an eigenvalue of $T^{W,V}(\cdot)$ closest to this target point.

We expand the subspaces ${\mathcal W}, {\mathcal V}$ into
$\widetilde{\mathcal W}, \widetilde{\mathcal V}$ so that 
\begin{equation}\label{eq:NEP_Hermite_interpolate}
	{\mathcal R}\big(\widetilde{\lambda}\big) = {\mathcal R}^{\widetilde{\mathcal W}, \widetilde{\mathcal V}}\big(\widetilde{\lambda}\big)	
				\quad  \text{and}  \quad
	{\mathcal R}^{(j)}\big(\widetilde{\lambda}\big) = \left[ {\mathcal R}^{\widetilde{\mathcal W}, \widetilde{\mathcal V}} \right]^{(j)} \big(\widetilde{\lambda}\big)
\end{equation}	
hold for $j = 1,\,\dots,\, {\sq}$ and for a prescribed positive integer ${\sq}$. The following generalization
of Lemma \ref{thm:main_interpolation} indicates how this Hermite interpolation property can
be attained. This result is also a corollary of \cite[Theorem 1]{Gugercin2009}.
\begin{lemma}\label{thm:main_interpolation_gen}
Suppose that $\mu \in {\mathbb C}$ is not an eigenvalue of $A(\cdot)$.
Let $\widetilde{\mathcal W} = {\mathcal W} \oplus {\mathcal W}_\mu$ and 
$\widetilde{\mathcal V} = {\mathcal V} \oplus {\mathcal V}_\mu$, where ${\mathcal V}, {\mathcal W}$
are given subspaces of equal dimension, and ${\mathcal W}_\mu$, ${\mathcal V}_\mu$
are subspaces defined as 
		\[
			{\mathcal V}_\mu := \bigoplus_{j=0}^{{\sq}-1} \Ran \left( \frac{\mathrm{d}^j}{\mathrm{d}s^j} ( A(s)^{-1} B(s) )  \bigg|_{s = \mu} \right),
				\; {\mathcal W}_\mu := \bigoplus_{j=0}^{{\sq}-1} \Ran \left( \frac{\mathrm{d}^j}{\mathrm{d}s^j} \big( C(s) A(s)^{-1} \big)^\ast  \bigg|_{s = \mu} \right)
		\]
for some positive integer ${\sq}$. Let $\widetilde{V}$ and $\widetilde{W}$ be basis matrices of $\widetilde{\mathcal{V}}$ and $\widetilde{\mathcal{W}}$, respectively and assume further that $\widetilde{W}^*A(\mu)\widetilde{V}$ is invertible. Then we have
	\begin{enumerate}
		\item ${\mathcal R}(\mu) = {\mathcal R}^{\widetilde{\mathcal W}, \widetilde{\mathcal V}}(\mu)$, and		
		\item ${\mathcal R}^{(j)}(\mu) = \left[ {\mathcal R}^{\widetilde{\mathcal W}, \widetilde{\mathcal V}} \right]^{(j)} (\mu)$	
		for $j = 1,\,\dots,\, 2{\sq}-1$.
	\end{enumerate}	
\end{lemma}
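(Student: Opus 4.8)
The plan is to reduce Lemma \ref{thm:main_interpolation_gen} to the already-cited moment-matching theorem \cite[Theorem 1]{Gugercin2009}, exactly as Lemma \ref{thm:main_interpolation} was obtained in the rational case, but now treating $A(s)$, $B(s)$, $C(s)$ as analytic matrix-valued functions rather than as the affine-in-$s$ pencil $sI-A$. Observe first that $\mathcal{R}(s) = C(s)A(s)^{-1}B(s) - D(s)$ is the transfer function of the (linear-in-state, but $s$-varying) realization with system matrix $A(s)$, input matrix $B(s)$, output matrix $C(s)$ and feedthrough $D(s)$, and that $\mathcal{R}^{\widetilde{\mathcal W},\widetilde{\mathcal V}}(s)$ is the transfer function of the Petrov--Galerkin-projected realization $(\widetilde W^\ast A(s)\widetilde V,\ \widetilde W^\ast B(s),\ C(s)\widetilde V,\ D(s))$. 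Since $D(s)$ is common to both, it suffices to match $C(s)A(s)^{-1}B(s)$ and its reduction $C(s)\widetilde V\,(\widetilde W^\ast A(s)\widetilde V)^{-1}\widetilde W^\ast B(s)$ together with their first $2\sq-1$ derivatives at $s=\mu$.

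The key steps, in order, are as follows. First I would record the invertibility hypotheses: $\mu$ is not an eigenvalue of $A(\cdot)$, so $A(\mu)$ is invertible; and by assumption $\widetilde W^\ast A(\mu)\widetilde V$ is invertible, so both transfer functions are analytic in a neighborhood of $\mu$ and the derivatives in the statement exist. Second, I would invoke the tangential/matrix moment-matching result of \cite[Theorem 1]{Gugercin2009}: if the projection space $\widetilde{\mathcal V}$ contains $\Ran\bigl(\tfrac{\mathrm d^j}{\mathrm ds^j}(A(s)^{-1}B(s))\big|_{s=\mu}\bigr)$ for $j=0,\dots,\sq-1$, then the reduced transfer function matches the first $\sq$ block-moments (value plus $\sq-1$ derivatives) from the ``input side''; symmetrically, if $\widetilde{\mathcal W}$ contains $\Ran\bigl((\tfrac{\mathrm d^j}{\mathrm ds^j}(C(s)A(s)^{-1}))^\ast\big|_{s=\mu}\bigr)$ for $j=0,\dots,\sq-1$, then another $\sq$ moments match from the ``output side.'' Third, I would argue that these two sets of conditions, being satisfied simultaneously by $\widetilde{\mathcal V} = \mathcal V\oplus\mathcal V_\mu$ and $\widetilde{\mathcal W} = \mathcal W\oplus\mathcal W_\mu$ (the extra direct summands $\mathcal V$, $\mathcal W$ only enlarge the spaces and cannot destroy containment), combine to give matching of the first $2\sq$ moments, i.e. $\mathcal R(\mu) = \mathcal R^{\widetilde{\mathcal W},\widetilde{\mathcal V}}(\mu)$ and $\mathcal R^{(j)}(\mu) = [\mathcal R^{\widetilde{\mathcal W},\widetilde{\mathcal V}}]^{(j)}(\mu)$ for $j=1,\dots,2\sq-1$. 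Finally I would note that the result is independent of the particular choice of basis matrices $\widetilde V$, $\widetilde W$, since $\mathcal R^{\widetilde{\mathcal W},\widetilde{\mathcal V}}(\cdot)$ depends only on the subspaces $\widetilde{\mathcal V}$, $\widetilde{\mathcal W}$.

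The main obstacle is justifying the ``two-sided doubling'' step cleanly in the $s$-dependent setting: in the classical rational case one expands $A(s)^{-1}B(s) = (sI-A)^{-1}B = -\sum_{k\ge0}(s-\mu)^k(A-\mu I)^{-(k+1)}B$ near $\mu$, so matching ranges of derivatives is literally matching ranges of the Krylov-type blocks $(A-\mu I)^{-j}B$; here the analogous Taylor coefficients are the more complicated expressions $\tfrac{1}{j!}\tfrac{\mathrm d^j}{\mathrm ds^j}(A(s)^{-1}B(s))\big|_{s=\mu}$, obtained by repeatedly differentiating the identity $A(s)\,\bigl(A(s)^{-1}B(s)\bigr) = B(s)$ via the Leibniz rule. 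I would either cite that \cite[Theorem 1]{Gugercin2009} is already stated for general analytic realizations (which is the intended reading, since the paper calls this a ``corollary'' of it), or else include a short lemma: if the columns of the Taylor coefficients of $A(s)^{-1}B(s)$ at $\mu$ up to order $\sq-1$ lie in $\Ran\widetilde V$, and similarly for $(C(s)A(s)^{-1})^\ast$ in $\Ran\widetilde W$, then the error $\mathcal R(s) - \mathcal R^{\widetilde{\mathcal W},\widetilde{\mathcal V}}(s)$ has a zero of order $2\sq$ at $\mu$ — proved by writing the error in the standard Petrov--Galerkin residual form $C(s)\bigl(A(s)^{-1} - \widetilde V(\widetilde W^\ast A(s)\widetilde V)^{-1}\widetilde W^\ast\bigr)B(s)$, inserting the oblique projector onto $\Ran\widetilde V$ along the left, and counting vanishing orders of the two projected residuals. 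Everything else is bookkeeping.
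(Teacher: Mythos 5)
Your proposal is correct and takes essentially the same approach as the paper: the paper offers no proof at all beyond the one-line remark that the lemma is a corollary of \cite[Theorem 1]{Gugercin2009}, which is precisely the parameter-dependent Petrov--Galerkin moment-matching result you invoke (and your fallback sketch via the projected-residual factorization of $\mathcal R - \mathcal R^{\widetilde{\mathcal W},\widetilde{\mathcal V}}$ is the standard argument behind that theorem).
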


Based on the discussions and the subspace expansion strategy above, we outline the subspace framework 
to locate the eigenvalue of $T(\cdot)$ closest to the target point $\tau \in {\mathbb C}$ in Algorithm \ref{alg:NEP_SM}.
At iteration $\ell$ of the algorithm, first the subspaces ${\mathcal W}_{\ell-1}, {\mathcal V}_{\ell-1}$
are expanded to ${\mathcal W}_{\ell}, {\mathcal V}_{\ell}$ to achieve Hermite interpolation conditions
at the current candidate $\lambda_\ell$ for the eigenvalue in lines \ref{NEP_exp_start}--\ref{NEP_exp_end}.
Then the next candidate $\lambda_{\ell+1}$ is retrieved by computing the eigenvalue of $T^{W_\ell, V_\ell}(\cdot)$ closest to the target point. 
The termination condition employed at the end in line \ref{NEP_termination} is specified in Section \ref{sec:num_results}
in a way that also sheds light into the choice of the eigenvector estimate $v$ returned in line \ref{NEP_termination}. 

The quick convergence result of Theorem \ref{thm:quad_conv} extends to Algorithm~\ref{alg:NEP_SM} in a straightforward
fashion. Two consecutive iterates $\lambda_\ell,\, \lambda_{\ell+1}$ of Algorithm~\ref{alg:NEP_SM} satisfy
\[
     | \lambda_{\ell + 1} - \lambda_\ast | \leq C | \lambda_\ell  - \lambda_\ast |^2
\]
for some constant $C>0$, provided $\lambda_\ell,\, \lambda_{\ell+1}$ are sufficiently close to an eigenvalue $\lambda_\ast$
and under non-defectiveness and non-degeneracy assumptions analogous to Assumptions~\ref{assume:non_defect}
and~\ref{assume:non_degeneracy}.

\begin{algorithm}[tb]
 \begin{algorithmic}[1]
 
\REQUIRE{matrices $T_1,\,\dots,\,T_{\kappa}  \in \mathbb{C}^{\sn \times \sn}$, 
	meromorphic functions $f_1,\,\dots, \,f_{\kappa} : {\mathbb C} \rightarrow {\mathbb C}$ as in \eqref{eq:NEP_int0},
	partition parameter ${\sm} \in {\mathbb Z}^+$, interpolation parameter ${\sq} \in {\mathbb Z}$ with
	${\sq} \geq 2$, target $\tau \in {\mathbb C}$.}
\ENSURE{estimate $\lambda \in \C$ for the eigenvalue closest to $\tau$ and corresponding eigenvector estimate $v\in\C^{\sn} \setminus \{0\}$.}

\vskip .7ex

\STATE Partition $T(s)$ as in (\ref{eq:partition_T}) so that $A(s) \in {\mathbb C}^{{\sk}\times {\sk}}$, $B(s)	\in {\mathbb C}^{{\sk}\times {\sm}}$, 
$C(s) \in {\mathbb C}^{{\sm}\times {\sk}}$, $D(s) \in {\mathbb C}^{{\sm} \times {\sm}}$ for all $s \in \C$, where ${\sk} := {\sn}-{\sm}$.

\STATE $\lambda_0 \gets \tau$.

\vskip .7ex

\textcolor{mygreen}{\textbf{$\%$ main loop}}
\FOR{$\ell = 0, \, 1,\,2,\,\dots$}
	
	  \STATE $\widetilde{V}_\ell \gets A(\lambda_\ell )^{-1} B(\lambda_\ell)$ and $\widetilde{W}_\ell \gets A(\lambda_\ell )^{-\ast} C(\lambda_\ell)^\ast$.	\label{NEP_exp_start}	
			\FOR{$j = 1,\, \dots,\, {\sq}-1$}
		  \STATE $\widehat{V}_\ell \gets \frac{ \mathrm{d}^j }{ \mathrm{d}s^j } ( A( s )^{-1} B( s ) ) \big|_{s = \lambda_\ell}$ and  
		  		$\widetilde{V}_\ell \gets \begin{bmatrix} \widetilde{V}_{\ell} & \widehat{V}_\ell \end{bmatrix}$.
		  \STATE $\widehat{W}_\ell \gets \frac{ \mathrm{d}^j }{ \mathrm{d}s^j } ( A( s )^{-\ast} C( s )^\ast ) \big|_{s = \lambda_\ell}$
						and
				$\widetilde{W}_\ell \gets \begin{bmatrix} \widetilde{W}_{\ell} & \widehat{W}_\ell \end{bmatrix}$.
		        \ENDFOR 	
		        
	\vskip .7ex	        
		        
	\textcolor{mygreen}{\textbf{$\%$ expand the subspaces to interpolate at $\lambda_\ell$}} 
	\IF{$\ell = 0$}
		\STATE $V_0 \gets \operatorname{orth}\left( \widetilde{V}_0 \right)
						\text{ and }  W_0 \gets \operatorname{orth}\left( \widetilde{W}_0 \right)$.
	\ELSE	        
		\STATE $V_\ell \gets \operatorname{orth}\left(\begin{bmatrix} V_{\ell-1} & \widetilde{V}_\ell \end{bmatrix}\right)
			\text{ and } W_\ell \gets \operatorname{orth}\left(\begin{bmatrix} W_{\ell-1} & \widetilde{W}_\ell \end{bmatrix}\right)$. \label{N_orthogonalize}
	\ENDIF	\label{NEP_exp_end}
	\STATE Form $T^{W_{\ell},V_{\ell}}(s) := 
							\begin{bmatrix}
                              A^{W_{\ell},V_{\ell}}(s)	&	B^{W_{\ell}} (s)		\\
								C^{V_\ell} (s)	&	D(s)		\\
							\end{bmatrix}$,
						where $A^{W_{\ell},V_{\ell}}(\cdot),\, B^{W_{\ell}} (\cdot),\, C^{V_\ell} (\cdot)$	
						are defined as in \eqref{eq:NEP_defnABC_red}.
		
	\vskip .7ex	
						
	\textcolor{mygreen}{\textbf{$\%$ update the eigenvalue estimate}}
	\STATE	$\lambda_{\ell+1}, v_{\ell+1} \gets \text{ the eigenvalue, eigenvector of } T^{W_{\ell}, V_{\ell}}(\cdot)$
			closest to $\tau$.	\label{NEP_inter_point}  
	\vskip .2ex
	\STATE \textbf{Return} $\lambda \gets \lambda_{\ell+1}$ and 
	$
	  v  \gets 	\begin{bmatrix}
			V_\ell	&	0	\\
			0		&	I_{\sm}
		\end{bmatrix} v_{\ell+1} 
	$
	 if convergence has occurred. \label{NEP_termination} 
\ENDFOR
 \end{algorithmic}
\caption{Subspace method to compute a nonlinear eigenvalue closest to a prescribed target}
\label{alg:NEP_SM}
\end{algorithm}

\section{Computing Multiple Eigenvalues}\label{sec:multiple_eig}
The proposed subspace frameworks, Algorithms \ref{alg:SM} and \ref{alg:NEP_SM} for rational eigenvalue
problems and general nonlinear eigenvalue problems, are meant to estimate only one eigenvalue
closest to the prescribed target $\tau$. However, they have natural extensions to compute $k$ eigenvalues
closest to the target for a prescribed integer $k \geq 2$. These extensions are based on extracting multiple 
eigenvalues of the projected problems, and expanding the projection spaces so as to ensure Hermite interpolation
at some of these eigenvalues. 

Before proposing three alternatives for the interpolation points, let us remark
a subtle issue. There is the possibility that some of the eigenvalues of the projected problems $L^{{W}, {V}}(\cdot)$
and $T^{{W},{V}}(\cdot)$ are indeed also the eigenvalues of their top-left blocks  
$L^{W,V}_A(s) := W^\ast A V - s W^\ast V$ and $A^{W,V}(\cdot)$. Even though this situation seems unlikely,
we observe in practice that it sometimes occurs when multiple eigenvalues of the projected problems  
are extracted. We do not take such eigenvalues of $L^{{W}, {V}}(\cdot)$ and $T^{{W},{V}}(\cdot)$ 
into consideration; such eigenvalues may correspond to the poles 
of $R^{{\mathcal W}, {\mathcal V}}(\cdot)$ and ${\mathcal R}^{{\mathcal W}, {\mathcal V}}(\cdot)$ rather than the eigenvalues of 
$R^{{\mathcal W}, {\mathcal V}}(\cdot)$ and ${\mathcal R}^{{\mathcal W}, {\mathcal V}}(\cdot)$.

To summarize, in lines \ref{inter_point} and \ref{NEP_inter_point} of Algorithms \ref{alg:SM} and \ref{alg:NEP_SM}, we choose the interpolation
points for the next iteration from the set $\Lambda^{W_\ell, V_\ell}$ consisting of all (finite) eigenvalues of 
$L^{{W}_\ell, {V}_\ell}(\cdot)$ and $T^{{W}_\ell,{V}_\ell}(\cdot)$ that are not eigenvalues of 
$L^{W_\ell,V_\ell}_A(\cdot)$ and $A^{W_\ell,V_\ell}(\cdot)$. Specifically, we employ one of the following three viable strategies for the selection 
of the interpolation points at the next iteration among $\lambda_{\ell + 1}^{(1)},\, \dots,\, \lambda_{\ell + 1}^{(k)}$,
the $k$ closest to the target point $\tau$ in $\Lambda^{W_\ell, V_\ell}$: 
\begin{enumerate}[leftmargin = 1.65cm]
\item[{\sf ALL:}]\textbf{Interpolate at up to all of the $k$ closest eigenvalues:} 
Hermite interpolation is performed at the next iteration at each $\lambda_{\ell + 1}^{(j)}$
unless the corresponding residual is below the convergence threshold for $j = 1,\, \dots,\, k$;
see (\ref{eq:terminate}) below for the specification of the residual corresponding to $\lambda_{\ell + 1}^{(j)}$.
\item[{\sf BR:}] \textbf{Interpolate at the eigenvalue among the $k$ closest with the best residual:}
Among the points $\lambda_{\ell + 1}^{(1)},\, \dots,\, \lambda_{\ell + 1}^{(k)}$
with residuals greater than the convergence threshold, we choose only the one with the smallest residual for Hermite interpolation 
at the next iteration.
\item[{\sf WR:}] \textbf{Interpolate at the eigenvalue among the $k$ closest with the worst residual:}
We perform Hermite interpolation at only one of $\lambda_{\ell + 1}^{(1)},\, \dots,\, \lambda_{\ell + 1}^{(k)}$, whichever has the largest residual.
\end{enumerate}

\section{One-Sided Variations}\label{sec:one_sided}
Variants of the subspace methods introduced for the rational eigenvalue problems and general nonlinear 
eigenvalue problems are obtained by forming $V_\ell$ as suggested in the proposed frameworks, but 
setting $W_\ell = V_\ell$. Interpolation results in Lemmas \ref{thm:main_interpolation} and  \ref{thm:main_interpolation_gen}
hold even with ${\mathcal V}_\mu$ as stated in those lemmas and ${\mathcal W}_\mu = {\mathcal V}_\mu$, but with the
equality of the derivatives holding in the second parts up to the $(\sq - 1)$st derivative (rather than the $(2\sq - 1)$st 
derivative). In particular, provided $\sq \geq 3$, the interpolation properties between the full and reduced
matrix-valued functions, as well as their first two derivatives are attained at $\mu$. This paves the way for an analysis
analogous to that in Sections \ref{sec:analytic_svals} and \ref{sec:convergence_analysis}, and leads to
an at least quadratic convergence result for the sequences of eigenvalue estimates.

In our experience, these one-sided variants sometimes tend to be quicker. As an example, for a proper 
rational eigenvalue problem $R_{\sp} = C (sI_{\sk} - A)^{-1} B$ with ${\sq} = 3$, one-sided variants expand the 
projection subspace with the directions
\[	
	\begin{bmatrix}
		(A - \lambda_\ell I_{\sk})^{-1} B
			&
		(A - \lambda_\ell I_{\sk})^{-2} B
			&
		(A - \lambda_\ell I_{\sk})^{-3} B
	\end{bmatrix}
	\hskip 6ex
\]
for interpolation at $\lambda_\ell$, while the original two sided subspace method to achieve
quadratic convergence needs to expand the left and right subspaces with the directions
 \[	
 	\begin{bmatrix}
		(A - \lambda_\ell I_{\sk})^{-1} B
			&
		(A - \lambda_\ell I_{\sk})^{-2} B
	\end{bmatrix}
		\quad	\text{and}		\quad
	\begin{bmatrix}
		((A - \lambda_\ell I_{\sk})^\ast)^{-1} C^\ast
			&
		((A - \lambda_\ell I_{\sk})^\ast)^{-2} C^\ast
	\end{bmatrix},
\]
respectively. Both of these expansion tasks require one LU decomposition, but the latter requires
additional back and forward substitutions. Two-sided method also needs to orthogonalize both of the
projection matrices at every subspace iteration, as opposed to orthogonalization of only one 
projection matrix for the one-sided variant. Yet, it appears that the two-sided subspace method
is usually more reliable and numerically more stable in practice.

In what follows, we refer to one-sided variations of {\sf ALL}, {\sf BR}, {\sf WR} as
{\sf ALL1}, {\sf BR1}, {\sf WR1}, respectively.

\section{Numerical Results}\label{sec:num_results}
In this section, we apply the proposed subspace frameworks to several large-scale nonlinear
eigenvalue problems. 
Our implementation and numerical experiments are performed
in Matlab R2020b on an iMac with Mac OS~11.3.1 operating system, Intel\textsuperscript{\textregistered} Core\textsuperscript{\texttrademark}
i5-9600K CPU and 32GB RAM.

In the subsequent three subsections, we present numerical results on proper rational eigenvalue problems
given in the transfer function form, polynomial eigenvalue problems, and three other nonlinear eigenvalue problems
that are neither polynomial nor rational. In these subsections, when reporting the runtime, number of iterations, number of
LU decompositions for a problem, we always run the algorithm five times, and present
the average over the five runs. Before presenting the numerical results, we spell out the
important implementation details below. For the rest, recall that $k$ is the prescribed
number of eigenvalues sought closest to the target point.   

\medskip

\noindent
\textbf{Termination.}
The algorithms are terminated when the norms of the relative residuals of the Ritz pairs associated 
with the $k$ closest eigenvalues of the projected problems are less than a prescribed tolerance
\texttt{tol}. Formally, letting $\lambda^{(j)}_{\ell}$ and $v^{(j)}_{\ell}$ denote the 
$j$th closest eigenvalue of $T^{W_\ell , V_\ell}(\cdot)$ to $\tau$ and corresponding eigenvector
for $j = 1,\,\dots,\, k$, we terminate if
\begin{equation}\label{eq:terminate}
	\Rs\big(\lambda^{(j)}_{\ell}, v^{(j)}_{\ell}\big)
		:=
		\frac{  \left\| T\big( \lambda^{(j)}_{\ell} \big)
				\begin{bmatrix}
					V_\ell	&	0	\\
					0		&	I_{\sm}
				\end{bmatrix}
	v^{(j)}_{\ell} \right\|_\infty   \: \big/  \;   {\big\| v^{(j)}_{\ell} \big\|}_{\infty} }{ \big| f_1\big(\lambda^{(j)}_\ell\big) \big| {\| T_1 \|}_\infty + \dots + \big| f_\kappa\big(\lambda^{(j)}_\ell\big) \big| {\| T_\kappa \|}_\infty }  <  \texttt{tol}
\end{equation} 
for $j = 1,\,\dots,\, k$
for the general nonlinear eigenvalue problem setting of \eqref{eq:NEP_int}. 

A similar termination condition is adopted for the proper rational eigenvalue problems in the transfer 
function form, i.e., $R(s) = R_{\sp}(s) := C(sI_{\sk} - A)^{-1}B v$. To be precise, 
if $\lambda^{(j)}_{\ell}$ and $v^{(j)}_{\ell}$ denote the 
the $j$th closest eigenvalue of $L^{W_\ell , V_\ell}(\cdot)$ to $\tau$ and a corresponding eigenvector
for $j = 1,\,\dots,\, k$, we terminate when
\begin{equation}\label{eq:terminate2}
	\Rs_{\mathsf{r}}\big(\lambda^{(j)}_{\ell}, v^{(j)}_{\ell}\big)
		:=
		\frac{  \left\| L \big( \lambda^{(j)}_{\ell} \big)
				\begin{bmatrix}
					V_\ell	&	0	\\
					0		&	I_{\sn\sd}
				\end{bmatrix}
	v^{(j)}_{\ell} \right\|_\infty \: \big/  \;   {\big\| v^{(j)}_{\ell} \big\|}_{\infty} }{ \big| \lambda^{(j)}_\ell \big|  +					
					\left\|
						\begin{bmatrix}
						A	&	B	\\
						C	&	0
						\end{bmatrix} 					
					\right\|_\infty
					}  <  \texttt{tol}
\end{equation} 
for $j = 1, \dots, k$.  

\medskip

\noindent
\textbf{Initial Subspaces.} 
We require the initial projected matrices $A^{W_0,V_0}(\cdot)$ 
and $W^\ast_0 A V_0$ (in the general nonlinear eigenvalue problem setting and in the proper rational eigenvalue problem setting, respectively)
to be of size $k \times k$ at least.
To make sure this is the case, we form the initial projected problem by interpolating the full problem at the target point $\tau$,
as well as at randomly selected points close to the target point unless otherwise specified.

\medskip

\noindent
\textbf{Orthogonalization of the Bases for the Subspaces.}
The orthogonalization of the bases for the expansion subspaces (i.e., the columns of $\widetilde{V}_\ell, \widetilde{W}_\ell$ in
lines \ref{R_orthogonalize} and \ref{N_orthogonalize} of Algorithms~\ref{alg:SM} and~\ref{alg:NEP_SM}) 
with respect to the existing projection subspaces (spanned by the columns of $V_{\ell-1}, W_{\ell-1}$) via
\[
	\widetilde{V}_\ell  -  V_{\ell-1} \big(V_{\ell-1}^\ast \widetilde{V}_\ell\big),	\quad\quad	
	\widetilde{W}_\ell  -  W_{\ell-1} \big(W_{\ell-1}^\ast \widetilde{W}_\ell\big)
\]
are performed several times in practice. This reorthogonalization strategy seems to improve the stability of the subspace
frameworks, especially close to convergence. In particular, the column spaces of
$(A - s I_{\sk})^{-1} B$ in the case of Algorithm~\ref{alg:SM} and 
$A (s)^{-1} B$ in the case of Algorithm~\ref{alg:NEP_SM} at $s = \lambda_\ell$ and $s = \lambda_{\ell+1}$
are close to each other when $\lambda_{\ell} \approx \lambda_{\ell-1}$. 
Similar remarks also hold for the derivatives of $(A - s I_{\sk})^{-1} B$ and $A (s)^{-1} B$. Hence, the columns of
$\: [ \, V_{\ell-1}  \;\, \widetilde{V}_\ell  \, ] \:$ are nearly linearly dependent, and
these projection matrices are ill-conditioned. Analogously, the 
matrix $\: [ \, W_{\ell-1}  \;\, \widetilde{W}_\ell  \, ] \:$ for left projections 
becomes ill-conditioned
when $\lambda_\ell$ is close to convergence. In our experience, the reorthogonalization strategy, in the presence
of rounding errors, appears to yield well-conditioned projection matrices with nearly orthonormal 
columns. 

\subsection{Proper Rational Eigenvalue Problems}\label{sec:Num_REP}
\subsubsection{A Banded Example}\label{sec:Num_REP_band}
We first employ the variants of Algorithm \ref{alg:SM} to locate several
eigenvalues of a proper rational matrix-valued function $R_{\sp}(s) = C (sI_{\sk} - A)^{-1} B$ closest to given  target points,
where $A \in {\mathbb R}^{10^5\times 10^5}$ is a sparse banded random matrix with bandwidth five, 
and $B \in {\mathbb R}^{10^5\times 2}$, $C \in {\mathbb R}^{2 \times 10^5}$ are also random 
matrices\footnote{The precise matrices $A,\, B,\, C$ for this proper rational eigenvalue problem, as well
as for the two random examples in Section \ref{sec:Num_REP_spar}, are publicly available 
at \url{https://zenodo.org/record/5811971}.}.
We perform experiments with two target points, namely $\tau_1 = -2 + {\rm i}$ and $\tau_2 = 3 - 7 {\rm i}$;
among these two, $\tau_1$ is close to the eigenvalues of $R_{\sp}(\cdot)$,
while $\tau_2$ is away from the eigenvalues of $R_{\sp}(\cdot)$.

\paragraph{Parameters}
We terminate when condition (\ref{eq:terminate2}) is met for \texttt{tol}$ = 10^{-12}$.
The interpolation parameter that determines how many derivatives will be
interpolated is chosen as $\sq = 2$ and $\sq = 5$ for the two-sided and one-sided
variants, respectively.

\paragraph{Estimation of One Eigenvalue}
The iterates $\lambda_\ell$ of Algorithm~\ref{alg:SM} and the corresponding relative
residuals to compute the eigenvalues closest to $\tau_1$ and $\tau_2$ are listed in Table \ref{table:RS1a}
and Table \ref{table:RS1b}, respectively. Note that the stopping criterion is met after 2 and 7 subspace iterations
for $\tau_1$ and $\tau_2$, respectively, and only the last two estimates for $\tau_2$ 
by Algorithm \ref{alg:SM} and the corresponding residuals are given in Table \ref{table:RS1b}.
The results reported in this table are consistent with the quadratic convergence assertion
of Theorem \ref{thm:quad_conv}. A comparison of the runtimes and accuracy of the computed 
results of Algorithm~\ref{alg:SM} and its one-sided variant with \texttt{eigs} is provided in Table \ref{table:RS1_1}.

\begin{table}
\centering
\caption{The iterates and corresponding residuals of Algorithm \ref{alg:SM} to locate the eigenvalue 
of the proper rational matrix-valued function in Section \ref{sec:Num_REP_band} closest to the target point $\tau_1$
and target point $\tau_2$.}
\label{table:RS1}
\subtable[$\; \tau_1 = -2 + {\rm i}$.]{\label{table:RS1a}
\begin{tabular}{c|cl}
$\ell$  & $\lambda_\ell$ 	& 	$\Rs_{\mathsf{r}}(\lambda_\ell, v_\ell)$  \\    
\hline
1 & $-\underline{2.00038}3015796 + \underline{0.9764}43733019{\rm i}$  &  $2.866 \cdot 10^{-7\phantom{\widehat{\Gamma}}}$ \\
2 & $-\underline{2.000388770264} + \underline{0.976430802383}{\rm i}$  &  $2.648 \cdot 10^{-16}$
\end{tabular}}
\medskip
\subtable[$\; \tau_2 = 3 - 7 {\rm i}$.]{\label{table:RS1b}
\begin{tabular}{c|cl}
$\ell$  & $\lambda_\ell$ & $\Rs_{\mathsf{r}}(\lambda_\ell, v_\ell)$  \\    
\hline
6 & $\underline{1.74988954}6050 - \underline{3.1442679}07227{\rm i}$  &  $1.330 \cdot 10^{-9\phantom{\widehat{\Gamma}}}$ \\
7 & $\underline{1.749889549609} - \underline{3.144267920104}{\rm i}$  & $3.263 \cdot 10^{-19}$
\end{tabular}}
\end{table}

\begin{table}
\centering
\caption{Runtimes in seconds of the subspace methods to compute the eigenvalue of the proper 
rational matrix-valued function in Section \ref{sec:Num_REP_band} closest to $\tau_1 =  -2 + {\rm i}$ and $\tau_2 = 3 - 7 {\rm i}$
compared with \texttt{eigs}. The differences in the computed results with \texttt{eigs} in absolute value 
are also reported.}
\label{table:RS1_1}
\begin{tabular}{c|cc|cc}
			&	\multicolumn{2}{c|}{time in s}	&	\multicolumn{2}{c}{difference with \texttt{eigs}}	\\
method		&	$\tau_1$	&	$\tau_2$		&	$\tau_1$			&		$\tau_2$		\\
\hline
two-sided	&	0.55		&	1.90			&	$4 \times 10^{-14}$	&		$3 \times 10^{-14\phantom{\widehat{\Gamma}}}$	\\		
one-sided	&	0.35		&	0.80			&	$4 \times 10^{-9}$ \,	&		$9 \times 10^{-14\phantom{\widehat{\Gamma}}}$	\\[.2em]
\texttt{eigs}	&	4.20		&	5.85			&		--			&			--	
\end{tabular}
\end{table}

\paragraph{Decay in the Residuals when Estimating Multiple Eigenvalues} 
Next we estimate the five eigenvalues closest to the target point $\tau_1$
by employing the variants {\sf ALL}, {\sf BR}, {\sf WR} of Algorithm \ref{alg:SM}.
All of these variants, as well as \texttt{eigs} return exactly the same five closest eigenvalues
up to twelve decimal digits. In order to compare and illustrate the progresses of {\sf ALL}, {\sf WR}, 
{\sf BR}, we present the relative residuals with respect to the number of iterations (until all five eigenvalues 
converge up to the prescribed tolerance) in Table~\ref{table:Multiple_Res}. The eigenvalue estimates corresponding to the residuals typed in blue italic letters are
selected as interpolation points at the next subspace iteration. For all three variants, typically, the relative 
residual of an eigenvalue estimate that is selected as an interpolation point decreases dramatically in the next iteration.

\begin{table}
\caption{The residuals of the iterates of the three variants of Algorithm \ref{alg:SM} to 
compute the five eigenvalues of the proper rational matrix-valued function in Section \ref{sec:Num_REP_band}
closest to $\tau_1 = -2 + {\rm i}$. Interpolation is performed at the eigenvalue estimates whose residuals are typed in blue italic letters.}
\label{table:Multiple_Res}
\subtable[Results for {\sf ALL}.]{
\small
\begin{tabular}{c|ccccc}
$\ell$  &  $\Rs_{\text{r}}\big(\lambda^{(1)}_\ell, v^{(1)}_\ell\big)$   &$\Rs_{\text{r}}\big(\lambda^{(2)}_\ell, v^{(2)}_\ell\big)$  & $\Rs_{\text{r}}\big(\lambda^{(3)}_\ell, v^{(3)}_\ell\big)$  &$\Rs_{\text{r}}\big(\lambda^{(4)}_\ell, v^{(4)}_\ell\big)$ & $\Rs_{\text{r}}\big(\lambda^{(5)}_\ell, v^{(5)}_\ell\big)$ \\
\hline
1 &  \textcolor{blue}{$\mathit{4.66 \cdot 10^{-8\vphantom{\widehat{\Gamma}}}}$}  &  \textcolor{blue}{$\mathit{2.86 \cdot 10^{-6}}$}  &   \textcolor{blue}{$\mathit{1.85 \cdot 10^{-6} \; \:}$} &  \textcolor{blue}{$\mathit{9.95 \cdot 10^{-7}}$}  &  \textcolor{blue}{$\mathit{6.76 \cdot 10^{-6}}$} \\
2 &  $2.42 \cdot 10^{-19}$  &  \textcolor{blue}{$\mathit{7.11 \cdot 10^{-12}}$}  &   \textcolor{blue}{$\mathit{1.44 \cdot 10^{-8} \; \:}$}   &  \textcolor{blue}{$\mathit{2.13 \cdot 10^{-9}}$}   &  \textcolor{blue}{$\mathit{3.48 \cdot 10^{-8}}$} \\
3 &  $3.91 \cdot 10^{-19}$   & $2.48 \cdot 10^{-18}$  &   $2.06 \cdot 10^{-18}$ &  $9.08 \cdot 10^{-19}$  &  $1.85 \cdot 10^{-18}$  
\end{tabular}}
\subtable[Results for {\sf BR}.]{ 
\small \begin{tabular}{c|ccccc}
$\ell$  &  $\Rs_{\text{r}}\big(\lambda^{(1)}_\ell, v^{(1)}_\ell\big)$   &$\Rs_{\text{r}}\big(\lambda^{(2)}_\ell, v^{(2)}_\ell\big)$  & $\Rs_{\text{r}}\big(\lambda^{(3)}_\ell, v^{(3)}_\ell\big)$  &$\Rs_{\text{r}}\big(\lambda^{(4)}_\ell, v^{(4)}_\ell\big)$ & $\Rs_{\text{r}}\big(\lambda^{(5)}_\ell, v^{(5)}_\ell\big)$ \\    
\hline
1 &     \textcolor{blue}{$\mathit{1.36\cdot 10^{-10\vphantom{\widehat{\Gamma}}}}$}     &	$3.10 \cdot 10^{-6}$     &	 $4.11 \cdot 10^{-6}$     &	$3.21 \cdot 10^{-6}$  & $1.65 \cdot 10^{-6}$  \\
2 &     $5.80 \cdot 10^{-19}$     &	\textcolor{blue}{$\mathit{4.40 \cdot 10^{-7}}$}     &	 $3.21 \cdot 10^{-6}$     &	$1.08 \cdot 10^{-6}$  &   $5.70 \cdot 10^{-6}$  \\
3 &     $1.45\cdot 10^{-18}$     &	$5.25 \cdot 10^{-14}$     &	  $5.46 \cdot 10^{-6}$     &	 \textcolor{blue}{$\mathit{1.16 \cdot 10^{-6}}$}  &  $5.89 \cdot 10^{-6}$  \\
4 &     $4.33\cdot 10^{-19}$     &	$2.04 \cdot 10^{-15}$     &	 \textcolor{blue}{$\mathit{1.23 \cdot 10^{-7}}$}     &	$9.23 \cdot 10^{-13}$  &  $2.82 \cdot 10^{-7}$  \\
5 &     $2.83 \cdot 10^{-19}$     &	$8.67 \cdot 10^{-17}$     &	 $9.51 \cdot 10^{-19}$     &	$3.88 \cdot 10^{-14}$  &  \textcolor{blue}{$\mathit{9.41 \cdot 10^{-10}}$}  \\
6 &     $8.17 \cdot 10^{-19}$     &	$9.08 \cdot 10^{-18}$     &	 $9.80 \cdot 10^{-19}$     &	$3.10 \cdot 10^{-15}$  &   $9.42 \cdot 10^{-19}$
\end{tabular}}
\subtable[Results for {\sf WR}.]{ 
\small \begin{tabular}{c|ccccc}
$\ell$  &  $\Rs_{\text{r}}\big(\lambda^{(1)}_\ell, v^{(1)}_\ell\big)$   &$\Rs_{\text{r}}\big(\lambda^{(2)}_\ell, v^{(2)}_\ell\big)$  & $\Rs_{\text{r}}\big(\lambda^{(3)}_\ell, v^{(3)}_\ell\big)$  &$\Rs_{\text{r}}\big(\lambda^{(4)}_\ell, v^{(4)}_\ell\big)$ & $\Rs_{\text{r}}\big(\lambda^{(5)}_\ell, v^{(5)}_\ell\big)$ \\
\hline
3 &     $5.58 \cdot 10^{-10\vphantom{\widehat{\Gamma}}}$     &	$1.33 \cdot 10^{-6}$     &	  $3.64 \cdot 10^{-6}$     &	 $2.21 \cdot 10^{-6}$  &  \textcolor{blue}{$\mathit{4.77 \cdot 10^{-6}}$}  \\
4 &     $4.03 \cdot 10^{-11}$     &	\textcolor{blue}{$\mathit{7.34 \cdot 10^{-8}}$}     &	 $1.22 \cdot 10^{-8}$     &	$7.31 \cdot 10^{-8}$  &  $5.20 \cdot 10^{-11}$  \\
5 &     $2.77 \cdot 10^{-12}$     &	$1.08 \cdot 10^{-18}$     &	 $1.56 \cdot 10^{-9}$     &	\textcolor{blue}{$\mathit{1.95 \cdot 10^{-8}}$}  &  $1.04 \cdot 10^{-11}$  \\
6 &     $6.65 \cdot 10^{-13}$     &	$1.18 \cdot 10^{-18}$     &	 \textcolor{blue}{$\mathit{7.95 \cdot 10^{-11}}$}     &	$7.21 \cdot 10^{-19}$  &   $5.75 \cdot 10^{-13}$  \\  
7 &     $2.60 \cdot 10^{-14}$     &	$2.42 \cdot 10^{-18}$     &	 $1.21 \cdot 10^{-18}$     &	$5.77 \cdot 10^{-19}$  &   $1.51 \cdot 10^{-15}$ 
\end{tabular} \normalsize}
\end{table}

\subsubsection{Comparison of Runtimes on Several Examples}\label{sec:Num_REP_spar}
We also test the subspace methods on 
\textbf{(i)} two randomly generated sparse examples that are not banded, which we refer as \texttt{R1} and \texttt{R2}, 
as well as \textbf{(ii)} the \texttt{Eady} example from the SLICOT benchmark 
collection\footnote{{see} \url{http://slicot.org/20-site/126-benchmark-examples-for-model-reduction}}
for model reduction. The non-banded matrix $A$ in \texttt{R1} and \texttt{R2} is of size $5000$ and $10000$
with about $0.1\%$ and $0.05\%$ nonzero entries, respectively, whereas $B$ and $C$ have two columns 
and two rows, respectively. The matrix $A$ in \texttt{Eady} is of size 598 and dense, while $B$ and $C$ are column and row vectors, respectively.

\paragraph{Parameters} 
In these experiments, we set the termination tolerance $\texttt{tol} = 10^{-8}$. For a fair comparison, we run
\texttt{eigs} also with termination tolerance equal to $10^{-8}$. 
The interpolation parameter (see Algorithm \ref{alg:SM}) is $\sq = 5$ for all of the one-sided variants of the 
subspace method, and $\sq = 2$ for the the two-sided subspace methods excluding the \texttt{Eady} example. In the 
applications of the two-sided subspace methods to the \texttt{Eady} example, we use $\sq = 5$ to 
reduce the number of LU decomposition computations, which is considerably expensive compared to back and 
forward substitutions as this is a dense example.

\paragraph{Comparison of Runtimes} 
Runtimes of the subspace methods and \texttt{eigs} are reported in Table \ref{table:RS2_1}. For the random examples,
the target point $\tau = 3 + 2{\rm i}$ is close to the spectrum, whereas $\tau = 1 + 9 {\rm i}$ is away. 
Similarly, the target points $\tau = -4 + 4{\rm i}$ and $\tau = -100+ 20 {\rm i}$
are close to and away from the spectrum of the \texttt{Eady} example.

All of the computed eigenvalues by the subspace methods and \texttt{eigs} differ by amounts around the prescribed
tolerance $10^{-8}$ with one exception. The fifth closest eigenvalue to  $\tau = 1 + 9 {\rm i}$ for \texttt{R2}
computed by the subspace methods ($0.2352 + 2.2138{\rm i}$) and \texttt{eigs} ($0.8429 + 2.1611{\rm i}$)
differ significantly and are located at a distance 6.8292 and 6.8407 to $\tau = 1 + 9 {\rm i}$, respectively. We have
verified that the absolute residuals at both of these computed eigenvalues are very small. 

The subspace methods on these examples have lower runtimes compared to \texttt{eigs} when the target point is 
away from the spectrum, and there is no notable difference in the runtimes on these examples when the target point is closer to the spectrum.


\begin{table}[h]
\centering
\caption{Runtimes of the subspace methods vs. \texttt{eigs} in seconds on the test examples of Section \ref{sec:Num_REP_spar}
are listed, where $\#$ eigs refers to the number of closest eigenvalues sought.}
\label{table:RS2_1}
\small
\begin{tabular}{l|ccccccc}
example, target, $\#$ eigs		&			{\sf ALL}	&	{\sf BR}	&	{\sf WR}	&	{\sf ALL1}		&	{\sf BR1}	&	{\sf WR1}		& \texttt{eigs} \\			
\hline
\texttt{R1}, $3 + 2{\rm i}$, 	5		&	5.15		&	3.55		&	4.82		&	3.40		&	2.22		&	2.50		&	2.27		\\
\texttt{R1}, $3 + 2{\rm i}$,	10		&	7.85		&	5.79		&	7.25		&	5.50		&	3.17		&	3.82		&	2.90		\\
\texttt{R1}, $1 + 9{\rm i}$,	5		&	7.20		&	5.53		&	7.64		&	4.39		&	4.51		&	4.70		&	9.85		\\	
\texttt{R1}, $1 + 9{\rm i}$,	10		&	11.79	&	11.46	&	10.94	&	8.36		&	7.02		&	5.70		&	14.60		\\	
\hline
\texttt{R2}, $3 + 2{\rm i}$, 	5		&	39.04		&	30.41		&	38.95		&	28.41		&	15.63		&	30.64		&	16.35		\\
\texttt{R2}, $1 + 9{\rm i}$,	5		&	45.79		&	44.15		&	52.21		&	34.21		&	16.46		&	33.96		&	104.44		\\
\hline
\texttt{Eady}, $-4 + 4{\rm i}$, 	5		&	0.13		&	0.15		&	0.13		&	0.13		&	0.26		&	0.17		&	0.39		\\
\texttt{Eady}, $-4 + 4{\rm i}$, 	10		&	0.19		&	0.34		&	0.26		&	0.34		&	0.85		&	0.68		&	0.50		\\
\texttt{Eady}, $-100 + 20{\rm i}$,	5		&	0.23		&	0.35		&	0.29		&	0.15		&	0.13		&	0.13		&	2.24		\\
\texttt{Eady}, $-100 + 20{\rm i}$,	10		&	0.37		&	0.73		&	0.43		&	0.32		&	0.29		&	0.32		&	2.90
\end{tabular}
\normalsize
\end{table}


\subsection{Polynomial Eigenvalue Problems}\label{sec:num_poly}
Next we consider large-scale polynomial eigenvalue problems available in the NLEVP collection \cite{Betcke2012}.
All of these involve quadratic matrix polynomials of the form $P(s) = P_0 + s P_1 + s^2 P_2$
for given square matrices $P_0,\, P_1,\, P_2 \in {\mathbb C}^{{\sn} \times {\sn}}$. 

Throughout this section, the termination condition in (\ref{eq:terminate}) is employed with the
tolerance $\texttt{tol} = 10^{-8}$, the partition parameter in (\ref{eq:partition_T}) is $\sm = 2$, and the interpolation 
parameter (see Algorithm \ref{alg:NEP_SM}) is $\sq = 2$ and $\sq = 3$ for the two-sided and one-sided subspace frameworks, respectively.
The reduced polynomial eigenvalue problems are solved by using a companion form linearization.
In Sections \ref{sec:Schodinger} and \ref{sec:other_Quad_Poly} below, in comparisons of the proposed frameworks 
with CORK, we use the default parameter values for CORK, except its termination tolerance is set equal to $10^{-8}$.
By default, CORK uses only one shift, which is the target point.

\subsubsection{Schrodinger Example}\label{sec:Schodinger}
The first example is the \texttt{schrodinger} example with ${\sn} = 1998$ that arises from a discretization 
of the Schr\"odinger operator. 

\paragraph{Estimation of Multiple Eigenvalues} We compute the $k$ closest eigenvalues to the target point $\tau = -0.36 - 0.001 {\rm i}$ 
for $k = 1,\, \dots,\, 10$ using the variants {\sf ALL}, {\sf BR}, and {\sf WR}
of Algorithm~\ref{alg:NEP_SM}, as well as the latest version of a free Matlab implementation\footnote{available at \url{http://twr.cs.kuleuven.be/research/software/nleps/cork.html}} 
of the CORK algorithm \cite{VanBeeumen2015}. In all cases, the computed eigenvalues by all these methods match exactly up to at least
eight decimal digits. In particular, in Figure~\ref{fig:2a} the eigenvalues near the target point (computed
by applying \texttt{eigs} to a linearization of $P$) are displayed with red crosses, and the ten closest eigenvalues 
computed by the subspace methods and CORK are encircled in blue.

\paragraph{Comparison of Runtimes}
In Figure~\ref{fig:2b}, the runtimes in seconds for the three variants of Algorithm~\ref{alg:NEP_SM}
and the CORK algorithm are plotted as the function of the  prescribed number of eigenvalues. The runtimes appear to be similar, though 
the variants {\sf ALL} and {\sf WR} look slightly faster. 

\paragraph{Decay in the Residuals} For the {\sf ALL} variant of Algorithm \ref{alg:NEP_SM} and to compute the ten eigenvalues closest to $\tau$, 
the termination criterion is satisfied after two iterations. The residuals for the ten eigenvalue estimates at each of these two 
iterations are given in Table~\ref{table:Schrodinger_Res_NLALL}. Interpolation is performed at every one of the ten eigenvalue estimates
at the second iteration, and all residuals decrease dramatically.

\begin{figure}
	\centering
	\begin{tabular}{cc}
			 \subfigure[Eigenvalues near the target $\tau = -0.36 - 0.001 {\rm i}$ and the computed ten eigenvalues closest 
		to $\tau$ by the variants of Algorithm \ref{alg:NEP_SM}. \label{fig:2a}]{
%
%
\begin{tikzpicture}

\begin{axis}[%
width=0.36\textwidth,
height=0.36\textwidth,
at={(0in,0in)},
scale only axis,
xmin=-0.38,
xmax=-0.345,
ymin=-0.004,
ymax=0.008,
ytick={-0.004,0,0.004},
axis background/.style={fill=white},
xlabel = {\footnotesize{$\Real(x)$}},
ylabel = {\footnotesize{$\Imag(x)$}},
ylabel near ticks,
xlabel near ticks,
legend style={at={(0,0)}, legend pos=north east, legend cell align=left, align=left, draw=white!15!black}
]
\addplot [color=red, line width=1.0pt, draw=none, mark size=4.0pt, mark=x, mark options={solid, red}]
  table[row sep=crcr]{%
-0.349248897206697	-0.00135470614458134\\
-0.349248897206697	0.00135470614458134\\
-0.349428828807898	-0.00148608665530957\\
-0.349428828807898	0.00148608665530957\\
-0.353308769953765	-0.002552521227971\\
-0.353308769953765	0.002552521227971\\
-0.353924879448936	-0.00278451480609424\\
-0.353924879448936	0.00278451480609424\\
-0.358941868049218	-0.00328245540361044\\
-0.358941868049218	0.00328245540361044\\
-0.360089206081967	-0.0035046042439341\\
-0.360089206081967	0.0035046042439341\\
-0.365139469722035	-0.00335203419385777\\
-0.365139469722035	0.00335203419385777\\
-0.366730309957725	-0.00343883593589287\\
-0.366730309957725	0.00343883593589287\\
-0.370898611263873	-0.00279184826006148\\
-0.370898611263873	0.00279184826006148\\
-0.372663535170539	-0.00264170489780094\\
-0.372663535170539	0.00264170489780094\\
-0.375368745067301	-0.00179406806961886\\
-0.375368745067301	0.00179406806961886\\
-0.376867441553525	-0.00137660804703724\\
};
\addlegendentry{\footnotesize{eigenvalues}}

\addplot [color=blue, line width=1.0pt, draw=none, mark size=4.0pt, mark=o, mark options={solid, blue}]
  table[row sep=crcr]{%
-0.360089206149881	-0.003504604146999\\
-0.358941868085453	-0.00328245506450672\\
-0.358941867914224	0.0032824551447438\\
-0.360089205936328	0.00350460415565419\\
-0.365139469858664	-0.00335203304642359\\
-0.353924880265238	-0.0027845139931042\\
-0.365139469836486	0.00335203256548438\\
-0.353308770162349	-0.00255251803902821\\
-0.353924878905698	0.00278451473775787\\
-0.366730310112902	-0.00343883234041995\\
};
\addlegendentry{\footnotesize{computed eigenvalues}}

\addplot [color=black, line width=1.0pt, draw=none, mark size=4.0pt, mark=asterisk, mark options={solid, black}]
  table[row sep=crcr]{%
-0.36	-0.001\\
};
\addlegendentry{\footnotesize{target}}

\end{axis}
\end{tikzpicture}
			 \subfigure[Runtimes in seconds as functions of prescribed number
		of eigenvalues. \label{fig:2b}]{
%
%
\begin{tikzpicture}

\begin{axis}[%
width=0.36\textwidth,
height=0.36\textwidth,
at={(0in,0in)},
scale only axis,
xmin=1,
xmax=10,
xtick={1,2,3,4,5,6,7,8,9,10},
ymin=0,
ymax=0.2014308146,
ytick={0.02,0.04,0.06,0.08,0.1,0.12,0.14,0.16,0.18,0.2,0.22},
axis background/.style={fill=white},
xlabel = {\footnotesize{\# of desired eigenvalues}},
ylabel = {\footnotesize{runtime in s}},
ylabel near ticks,
xlabel near ticks,
y tick label style={
        /pgf/number format/.cd,
            fixed,
            fixed zerofill,
            precision=2,
        /tikz/.cd
    },
legend style={at={(0,0)}, legend pos=north west, legend cell align=left, align=left, draw=white!15!black}
]
\addplot [color=black, line width=1.0pt, mark size=2.0pt, mark=*, mark options={solid, fill=black, black}]
  table[row sep=crcr]{%
1	0.0208797424\\
2	0.034201965\\
3	0.0362389078\\
4	0.0408659934\\
5	0.0568407254\\
6	0.067472051\\
7	0.0700244114\\
8	0.0782528292\\
9	0.0993236808\\
10	0.1050952752\\
};
\addlegendentry{\footnotesize{\sf ALL}}

\addplot [color=red, dashdotted, line width=1pt, mark size=2.0pt, mark=*, mark options={solid, fill=red, red}]
  table[row sep=crcr]{%
1	0.0210201916\\
2	0.0362325998\\
3	0.0474377736\\
4	0.0562261546\\
5	0.0747460296\\
6	0.0981699476\\
7	0.1180623576\\
8	0.1355287778\\
9	0.164369804\\
10	0.2014308146\\
};
\addlegendentry{\footnotesize{\sf BR}}

\addplot [color=blue, dotted, line width=1.0pt, mark size=2.0pt, mark=*, mark options={solid, fill=blue, blue}]
  table[row sep=crcr]{%
1	0.0205013944\\
2	0.0277675432\\
3	0.0480258362\\
4	0.0520097686\\
5	0.0546725084\\
6	0.0718745704\\
7	0.090957249\\
8	0.0906387262\\
9	0.105790206\\
10	0.1075754574\\
};
\addlegendentry{\footnotesize{\sf WR}}

\addplot [color=black, dashed, line width=1.0pt, mark size=2.0pt, mark=x, mark options={solid, black}]
  table[row sep=crcr]{%
1	0.0467365036\\
2	0.0465204186\\
3	0.0602097058\\
4	0.0789438822\\
5	0.074129838\\
6	0.0970012606\\
7	0.0866676908\\
8	0.1161963662\\
9	0.1602527506\\
10	0.1445022754\\
};
\addlegendentry{\footnotesize{CORK}}

\end{axis}
\end{tikzpicture}
		\end{tabular}
		\caption{The variants of Algorithm~\ref{alg:NEP_SM} applied to the \texttt{schrodinger} example.}
		\label{fig:Schrodinger}
\end{figure}

\begin{table}
\caption{The residuals of the eigenvalue estimates of the {\sf ALL} variant of Algorithm \ref{alg:NEP_SM} 
on the \texttt{schrodinger} example with the target point $\tau = -0.36 - 0.001 {\rm i}$.}
\label{table:Schrodinger_Res_NLALL}
\hskip -.5ex \small
\begin{tabular}{l}
 \begin{tabular}{c|ccccc}
$\ell$  &  $\Rs\big(\lambda^{(1)}_\ell, v^{(1)}_\ell\big)$   & $\Rs\big(\lambda^{(2)}_\ell, v^{(2)}_\ell\big)$  & $\Rs\big(\lambda^{(3)}_\ell, v^{(3)}_\ell\big)$  & $\Rs\big(\lambda^{(4)}_\ell, v^{(4)}_\ell\big)$  & $\:\, \Rs\big(\lambda^{(5)}_\ell, v^{(5)}_\ell\big) \:\,$ \\    
\hline
1 &   \textcolor{blue}{$\mathit{1.05 \cdot 10^{-8\vphantom{\widehat{\Gamma}}}}$}   &	 \textcolor{blue}{$\mathit{1.08 \cdot 10^{-8}}$}     &	  \textcolor{blue}{$\mathit{1.64 \cdot 10^{-8}}$}     &		 \textcolor{blue}{$\mathit{1.77 \cdot 10^{-8}}$}  &    \textcolor{blue}{$\mathit{3.02 \cdot 10^{-8}}$}  \\
2 &     $8.71 \cdot 10^{-16}$     &	 $9.37 \cdot 10^{-16}$    &	 $2.08 \cdot 10^{-15}$     &		$5.70 \cdot 10^{-16}$  &   $1.93 \cdot 10^{-15}$ 
\end{tabular} \medskip \\
\begin{tabular}{c|ccccc}
$\ell$  &  $\Rs\big(\lambda^{(6)}_\ell, v^{(6)}_\ell\big)$   & $\Rs\big(\lambda^{(7)}_\ell, v^{(7)}_\ell\big)$  & $\Rs\big(\lambda^{(8)}_\ell, v^{(8)}_\ell\big)$  & $\Rs\big(\lambda^{(9)}_\ell, v^{(9)}_\ell\big)$ & $\Rs\big(\lambda^{(10)}_\ell, v^{(10)}_\ell\big)$\\  
\hline
1 &      \textcolor{blue}{$\mathit{1.26 \cdot 10^{-8\phantom{\widehat{\Gamma}}}}$}     &		 \textcolor{blue}{$\mathit{4.32 \cdot 10^{-8}}$}     &	  \textcolor{blue}{$\mathit{2.68 \cdot 10^{-8}}$}     &	 \textcolor{blue}{$\mathit{1.38 \cdot 10^{-8}}$}  &   \textcolor{blue}{$\mathit{5.88 \cdot 10^{-8}}$}  \\
2 &     $2.34 \cdot 10^{-15}$     &		$2.83 \cdot 10^{-15}$     & $4.38 \cdot 10^{-15}$     &	$1.34 \cdot 10^{-15}$  &   $2.59 \cdot 10^{-15}$ 
\end{tabular}
\end{tabular} \normalsize
\end{table}

\subsubsection{Other Quadratic Eigenvalue Problems}\label{sec:other_Quad_Poly}
We have also experimented with various other quadratic eigenvalue problems (QEPs) from the NLEVP collection. 
In Table \ref{table:RS3_1}, the runtimes of the variants of Algorithm \ref{alg:NEP_SM} to compute the
five eigenvalues closest to prescribed target points for several QEPs are listed together with the runtimes 
of the CORK algorithm. 
The computed eigenvalues 
by all of the approaches are the same up to nearly eight decimal digits.


\begin{table}[h]
\centering
\caption{Runtimes of the subspace methods and CORK in seconds on several quadratic eigenvalue problems.
The examples marked with (D) are dense examples. The sizes of the problems are $\sn = 2472, 2000, 2000, 100172, 10000, 35955, 1331$
from top to bottom.}
\label{table:RS3_1}
\footnotesize
\begin{tabular}{c|c|ccccccc}
\hskip -.05ex example & target		&		{\sf ALL}	&	{\sf BR}	&	{\sf WR}	&	{\sf ALL1}		&	{\sf BR1}	&	{\sf WR1}	&    CORK    \\			
\hline
\hskip -.05ex \texttt{concrete} & $1 + 5{\rm i}$, 			&	0.12		&	0.07		&	0.07		&	0.04		&	0.06		&	0.06		&	0.07		\\
\hskip -.05ex \texttt{dirac} (D) & $1 + 0.2{\rm i}$				&	2.87		&	3.90		&	3.81		&	2.05		&	2.64		&	2.55		&	13.96		\\
\hskip -2ex \texttt{gen}$\_$\texttt{hyper2} (D) & $3 + 3{\rm i}$	  &	2.37		&	3.49		&	3.48		&	1.98		&	3.58		&	2.93		&	3.77		\\
\hskip -.05ex \texttt{acoustic}$\_$\texttt{wave}$\_$\texttt{2d} & $4 + 0.1{\rm i}$ 		&	4.24		&	4.71		&	3.67		&	3.54		&	2.84		&	2.74		&	3.89		\\
\hskip -.05ex \texttt{pdde}$\_$\texttt{stability} & $-0.1 + 0.03{\rm i}$		&	0.56		&	0.57		&	0.57		&	0.39		&	0.46		&	0.37		&	0.76			\\	
\hskip -.05ex \texttt{railtrack2} & $3 - {\rm i}$				&	23.40		&	20.47		&	18.99		&	17.36		&	10.95		&	12.42		&	11.24		\\
\hskip -.05ex\texttt{utrecht1331} & $300{\rm i}$			&	0.15			&	0.15			&	0.13			&	0.13		&	0.11		&	0.08		&	0.11	
\end{tabular}
\normalsize
\end{table}

\subsubsection{Comparison with a Rational Krylov Method with Adaptive Shifts}\label{sec:adaptive}
The comparisons in the previous two subsections are with CORK that uses a static shift, namely 
the prescribed target point, at every iteration. As Algorithm \ref{alg:NEP_SM} selects new
interpolation points at every iteration, it has similarities with a rational Krylov method for
nonlinear eigenvalue problems that chooses shifts adaptively, where shifts correspond to 
the interpolation points for the polynomial or rational approximation \cite{Beeumen2013, Guttel2014}.  

Here, we compare Algorithm \ref{alg:NEP_SM} with a modification of CORK that uses adaptive 
shifts, equivalently adaptive interpolation points. Specifically, every shift is used a few times.
Then the shift is set equal to the Ritz value closest to the target point. The reason to use 
every shift more than once is to interpolate not only the function values but also the derivatives.
Also, an alternative for the shift selection is to use the Ritz value with the smallest
residual. We have experimented with such alternatives only to observe that they lead to
approaches that are often less reliable.

The results to compute an eigenvalue closest to a target point on a few quadratic eigenvalue problems 
are given in Table \ref{table:adaptive}. In these examples, the computed eigenvalue estimates
by the two approaches are the same up to about four decimal digits.
We have consistently observed that Algorithm \ref{alg:NEP_SM} requires fewer number of
LU decompositions until termination as compared to adaptive 
CORK. A difficulty we have encountered is that our version of adaptive CORK often 
fails to converge to the correct eigenvalue unless the prescribed target point is close 
to an eigenvalue. This is the reason why the target point in these examples are chosen
close to an eigenvalue.

\begin{table}
\centering
\caption{A comparison of Algorithm \ref{alg:NEP_SM} with adaptive CORK on a few
quadratic eigenvalue problems. The computed closest eigenvalues to the target points
are reported in the column of $\lambda$. For both methods, the runtime in seconds
and $\#$ LU decompositions are listed.}
\label{table:adaptive}
\small
\begin{tabular}{ccc|cc|cc}
					&					& 						&\multicolumn{2}{c|}{Alg. \ref{alg:NEP_SM}}	&	\multicolumn{2}{c}{Adap. CORK}	\\
example		 		& target				&	$\lambda$				&	time			&		$\#$ lu				&	time	&		$\#$ lu	\\
\hline
\texttt{concrete} 		&  $-0.2 + 5{\rm i}$		&	$-0.2067 + 5.2350{\rm i}$	&	0.04			&	2 	&		0.26 		&		6.2	\\[.2em]		
\texttt{acoustic}$\_$\texttt{wave}$\_$\texttt{2d}			&  $4$		&	$4.0281 + 0.0429{\rm i}$				&	1.63			&	3.2		&	4.34			&	4.8		\\[.2em]	    
\texttt{pdde}$\_$\texttt{stability}			&  $-0.1$		&	$-0.1028 - 0.0001{\rm i}$					&	0.15			&	3		&	0.29			&	3.6		\\[.2em]		
\texttt{railtrack2}		&  $3 - 1{\rm i}$		&	$3.1433 - 1.7621{\rm i}$					&	10.17			&	3		&	22.87			&	5		\\[.2em]		
\end{tabular}
\end{table}

\subsubsection{The Effect of the Partition Parameter}
We investigate the effect of the partition parameter $\sm$ on the ${\sf ALL}$
variant numerically. In Figure \ref{fig:3a}, the runtimes are reported as $\sm$
varies in $[1,16]$ for three quadratic eigenvalue problems. The runtimes 
do not change much for smaller values of $\sm$, i.e., for $\sm \in [1,4]$.
But then for larger values of $\sm$ the runtimes increase gradually as $\sm$ 
is increased.

This dependence of the runtimes on $\sm$ is partly explained by the
number of LU decomposition and linear system solves performed, which
are depicted in Figure \ref{fig:3b}. As $\sm$ is increased, the number of
subspace iterations decreases slightly initially, possibly since larger values 
of $\sm$ result in more accurate interpolating reduced problems. But
for $\sm \geq 4$ the number of subspace iterations stagnate and do not
decrease anymore. This variation in the number of iterations is directly 
reflected into the number of LU decompositions shown in Figure \ref{fig:3b}.
On the other hand, the number of linear system solves increases
consistently as a function of $\sm$, also visible in Figure \ref{fig:3b}.
For smaller values of $\sm$ the decrease in the number of $LU$ decompositions
is offset by the increase in the number of linear system solves, leading
to a nearly constant dependence of the runtime on $\sm$. But for $\sm \geq 4$,
there is no offset for the increasing cost of linear system solves,
so the runtime increases.

Apart from runtime considerations, there appears to be a second good 
reason to choose $\sm$ small. For larger values of $\sm$, we have occasionally 
witnessed problems with convergence, whereas for smaller values of $\sm$
convergence is almost always guaranteed. This is merely a practical observation
as of now, which we hope to be able to reason in the future.

\begin{figure}
	\centering
	\begin{tabular}{cc}
	\hskip -2ex
			 \subfigure[Runtimes in seconds of the ${\sf ALL}$ variant as a function of $\sm$.
			 \label{fig:3a}]{
%
%
\definecolor{mycolor1}{rgb}{0.71765,0.27451,1.00000}%
\definecolor{mycolor2}{rgb}{0.46667,0.67451,0.18824}%
\begin{tikzpicture}

\pgfplotsset{
width=0.3\textwidth,
height=0.32\textwidth,
at={(0in,0in)},
ylabel near ticks,
xlabel near ticks,
scale only axis,
xmin=1,
xmax=16,
xtick={1,2,3,4,5,6,7,8,9,10,11,12,13,14,15},
xticklabels={{},{2},{},{4},{},{6},{},{8},{},{10},{},{12},{},{14},{}},
xlabel style={font=\color{white!15!black}},
xlabel={partition parameter $\sm$},
separate axis lines,
every outer y axis line/.append style={black},
every y tick label/.append style={font=\color{black}},
every y tick/.append style={black},
}

\begin{axis}[%
axis y line*=right,
ymin=0,
ymax=17,
yminorticks=true,
ylabel={time (\texttt{acoustic\_wave\_2d})},
legend style={at={(0.121,0.677)}, anchor=south west, legend cell align=left, align=left, draw=white!15!black}
]
\addplot [color=red, dashdotted, line width=1.0pt]
  table[row sep=crcr]{%
1	4.6918013924\\
2	4.6622574022\\
3	4.6669524316\\
4	4.8696641846\\
5	5.5601618454\\
6	6.35407769275\\
7	7.1534426456\\
8	7.906006566\\
9	8.8130408862\\
10	9.7947260536\\
11	10.4276922092\\
12	11.3881991475\\
13	12.39934788475\\
14	13.5272676594\\
15	14.3149617888\\
16	14.975199191\\
};

\end{axis}

\begin{axis}[%
axis y line*=left,
every outer y axis line/.append style={black},
every y tick label/.append style={font=\color{black}},
every y tick/.append style={black},
ymin=0,
ymax=1,
yminorticks=true,
ylabel={time (\texttt{schrodinger}, \texttt{utrecht1331})},
legend style={at={(0.02,0.68)}, anchor=south west, legend cell align=left, align=left, draw=white!15!black}
]
\addplot [color=blue, dotted, line width=1.0pt]
  table[row sep=crcr]{%
1	0.1544941724\\
2	0.1557020536\\
3	0.1546389778\\
4	0.167894336\\
5	0.198507207\\
6	0.2248281468\\
7	0.2599903372\\
8	0.3131925474\\
9	0.3687706462\\
10	0.422776131\\
11	0.4254340042\\
12	0.42815913\\
13	0.5021032098\\
14	0.5567568056\\
15	0.7184592102\\
16	0.8188812316\\
};
\addlegendentry{\footnotesize\texttt{utrecht1331}}

\addplot [color=mycolor2, line width=1.0pt]
  table[row sep=crcr]{%
1	0.072144447\\
2	0.0605092704\\
3	0.0641070946\\
4	0.08733640125\\
5	0.0974673826\\
6	0.121497242\\
7	0.149702954666667\\
8	0.179858865\\
9	0.2144531895\\
10	0.2565456408\\
11	0.3101644178\\
12	0.3710693868\\
13	0.465418218\\
14	0.5488116816\\
15	0.648244101\\
16	0.7095725618\\
};
\addlegendentry{\footnotesize\texttt{schrodinger}}

\addplot [color=red, dashdotted, line width=1.0pt]
  table[row sep=crcr]{%
1	-1\\
};
\addlegendentry{\footnotesize\texttt{acoustic\_wave\_2d}}
\end{axis}

\end{tikzpicture}
			 \subfigure[Number of LU decompositions and linear system solves by the ${\sf ALL}$ variant as functions of $\sm$
			 on the \texttt{acoustic}$\_$\texttt{wave}$\_$\texttt{2d} example. \label{fig:3b}]{
%
%
\definecolor{mycolor1}{rgb}{0.30196,0.74510,0.93333}%
\begin{tikzpicture}

\begin{axis}[%
width=0.30\textwidth,
height=0.32\textwidth,
at={(0in,0in)},
ylabel near ticks,
xlabel near ticks,
scale only axis,
xmin=1,
xmax=16,
xtick={2,3,4,5,6,7,8,9,10,11,12,13,14,15,16},
xticklabels={{2},{},{4},{},{6},{},{8},{},{10},{},{12},{},{14},{},{}},
xlabel style={font=\color{white!15!black}},
xlabel={partition parameter $\sm$},
separate axis lines,
every outer y axis line/.append style={black},
every y tick label/.append style={font=\color{black}},
every y tick/.append style={black},
ymin=0,
ymax=12,
axis y line*=left,
yminorticks=true,
ylabel={$\#$ LU decompositions},
yticklabel pos=right,
legend style={at={(0.097,0.738)}, anchor=south west, legend cell align=left, align=left, draw=white!15!black}
]
\addplot [color=blue, line width=1.0pt]
  table[row sep=crcr]{%
1	11\\
2	8.6\\
3	7.6\\
4	7\\
5	7\\
6	7\\
7	7\\
8	7\\
9	7\\
10	7\\
11	7\\
12	7\\
13	7\\
14	7\\
15	7\\
16	7\\
};
\end{axis}

\begin{axis}[%
width=0.30\textwidth,
height=0.32\textwidth,
at={(0in,0in)},
ylabel near ticks,
xlabel near ticks,
scale only axis,
xmin=1,
xmax=16,
xtick={2,3,4,5,6,7,8,9,10,11,12,13,14,15,16},
xticklabels={{2},{},{4},{},{6},{},{8},{},{10},{},{12},{},{14},{},{}},
xlabel style={font=\color{white!15!black}},
xlabel={partition parameter $\sm$},
separate axis lines,
every outer y axis line/.append style={black},
every y tick label/.append style={font=\color{black}},
every y tick/.append style={black},
ymin=0,
ymax=500,
axis y line*=right,
yminorticks=true,
ylabel={$\#$ linear system solves},
yticklabel pos=right,
legend style={at={(0.12,0.02)}, anchor=south west, legend cell align=left, align=left, draw=white!15!black}
]

\addplot [color=blue, line width=1.0pt]
  table[row sep=crcr]{%
1	-1 \\
};
\addlegendentry{\footnotesize $\#$ LU deco.}

\addplot [color=red, dashdotted, line width=1.0pt]
  table[row sep=crcr]{%
1	44\\
2	68.8\\
3	91.2\\
4	112\\
5	140\\
6	168\\
7	196\\
8	224\\
9	252\\
10	280\\
11	308\\
12	336\\
13	364\\
14	392\\
15	420\\
16	448\\
};
\addlegendentry{\footnotesize $\#$ lin. syst. solves}

\end{axis}
\end{tikzpicture}
		\end{tabular}
		\caption{The effect of $\, \sm \,$ (the partition parameter) to compute the five closest eigenvalues.
		The target points $\tau = -0.36 - 0.001{\rm i} , \, 300{\rm i} , \, 4 + 0.1{\rm i}$ are used for the 
		\texttt{schrodinger}, \texttt{utrecht1331}, \texttt{acoustic}$\_$\texttt{wave}$\_$\texttt{2d} 
		examples, respectively.}
		\label{fig:partition_effect}
\end{figure}

\subsection{Non-Rational, Non-Polynomial Nonlinear Eigenvalue Problems}\label{sec:num_NEP}
Now we apply the {\sf ALL} variant of Algorithm \ref{alg:NEP_SM}
to three nonlinear eigenvalue problems from the NLEVP collection, that are neither
polynomial nor rational.

The termination tolerance in (\ref{eq:terminate}) is $\texttt{tol} = 10^{-8}$ unless otherwise specified,
while the interpolation parameter in Algorithm \ref{alg:NEP_SM} and partition parameter in (\ref{eq:partition_T})  are $\sq = 2$ and $\sm = 2$
throughout this section. The eigenvalues of the reduced problems are computed 
using the Matlab implementation of NLEIGS \cite{Guttel2014} that is available
on the internet\footnote{available at \url{http://twr.cs.kuleuven.be/research/software/nleps/nleigs.html}}.

\subsubsection{The Gun Problem}
The \texttt{gun} problem, originating from modeling of a radio-frequency gun cavity,
concerns the solution of a nonlinear eigenvalue problem of the form
\[
	T(\lambda) v  =   \left( P_0 - \lambda P_1 + {\rm i} \sqrt{\lambda - \sigma_1^2} \, W_1 + {\rm i} \sqrt{\lambda - \sigma_2^2} \, W_2 \right) v   =  0
\]
for given real symmetric matrices $P_0,\, P_1,\, W_1,\, W_2 \in {\mathbb R}^{9956 \times 9956}$ with positive semidefinite $P_0$ and positive definite $P_1$. 
With the parameters $\sigma_1 = 0$, $\sigma_2 = 108.8774$,
the eigenvalues inside the upper half of the disk in the complex plane centered at $250^2$ on the real axis with radius $300^2 - 250^2$ 
are reported in several works in the literature.

\paragraph{Estimation of Eigenvalues} 
Here, we compute the $k$ eigenvalues closest to the target $\tau = (8 + 2.5 {\rm i}) \cdot 10^4$ for $k = 1,\,\dots,\, 10$
inside the specified upper half-disk by employing the {\sf ALL} variant. 
The initial interpolation points are not chosen randomly anymore. Rather, in addition to the target point $\tau$, we employ the following eight interpolation
points initially: $3 \cdot 10^4, 9 \cdot 10^4, (2+{\rm i}) \cdot 10^4, (6+{\rm i}) \cdot 10^4, (10 + {\rm i})\cdot 10^4,  (2+3{\rm i}) \cdot 10^4,  (6+3{\rm i}) \cdot 10^4, (10+3{\rm i}) \cdot 10^4$.

The computed five closest eigenvalues (encircled in blue) together with all eigenvalues (marked with red crosses) 
inside the half-disk are shown in Figure \ref{fig:5a}. In all cases, the computed eigenvalues are the same as those returned
by NLEIGS applied directly to the full problem up to prescribed tolerances. 
Figure \ref{fig:5b} depicts the runtime and LU decompositions required by the {\sf ALL}
variant as functions of $k$. The runtime of the {\sf ALL} variant on this \texttt{gun} example
is mainly affected by the number of LU decompositions, and linear system solves. As a result, the
runtimes and number of LU decompositions vary more or less in harmony in Figure \ref{fig:5b}
as $k$ increases.

\begin{figure}
 \centering
        \begin{tabular}{cc}
			 \subfigure[The eigenvalues of the \texttt{gun} example inside the upper semi-circle,
		and the computed five closest eigenvalues to the target $\tau = (8 + 2.5 {\rm i}) \cdot 10^4$ by the {\sf ALL} variant
		of Algorithm \ref{alg:NEP_SM}. \label{fig:5a}]{\input{gun_spec2.tikz}} &	\subfigure[The runtime in seconds and the number of LU decompositions 
		required by the {\sf ALL} variant of Algorithm \ref{alg:NEP_SM} as functions of the
		number of closest eigenvalues sought.\label{fig:5b}]{
%
%
\definecolor{mycolor1}{rgb}{0.00000,0.44706,0.74118}%
\begin{tikzpicture}

\begin{axis}[%
width=0.32\textwidth,
height=0.36\textwidth,
at={(0in,0in)},
scale only axis,
xmin=1,
xmax=10,
xtick={1,2,3,4,5,6,7,8,9,10},
separate axis lines,
every outer y axis line/.append style={black},
every y tick label/.append style={font=\color{black}},
every y tick/.append style={black},
ymin=2.6,
ymax=4.5993196408,
ytick={2.5,3,3.5,4,4.5, 5},
xlabel={\footnotesize{\# of desired eigenvalues}},
ylabel={\footnotesize{runtime in s}},
ylabel near ticks,
xlabel near ticks,
axis y line* = left,
axis background/.style={fill=white},
legend style={at={(0,0)}, legend pos=south east, legend cell align=left, align=left, draw=white!15!black}
]
\addplot [color=black, line width=1.0pt, mark size=2.0pt, mark=*, mark options={solid, fill=black, black}]
  table[row sep=crcr]{%
1	2.8611156346 \\
2	3.4938665312 \\
3	3.4783597364 \\
4	3.1616084554 \\
5	3.4971245616 \\
6	3.9137287106 \\
7	4.0020193306 \\
8	4.2339000160  \\
9	4.3386980154 \\
10	4.5993196408 \\
};
\addlegendentry{\footnotesize{runtime}}

\addplot [color=mycolor1, dashed, line width=1.0pt, mark size=2.0pt, mark=x, mark options={solid, mycolor1}]
  table[row sep=crcr]{%
100 100 \\
};
\addlegendentry{\footnotesize{\# of LU deco.}}

\end{axis}

\begin{axis}[%
width=0.32\textwidth,
height=0.36\textwidth,
at={(0in,0in)},
scale only axis,
xmin=1,
xmax=10,
xtick={1,2,3,4,5,6,7,8,9,10},
separate axis lines,
every outer y axis line/.append style={black},
every y tick label/.append style={font=\color{black}},
every y tick/.append style={black},
ymin=11,
ymax=20,
ytick={12,13.5,15,16.5,18,19.5, 21},
ylabel={\footnotesize{\# of LU decompositions}},
ylabel near ticks,
xlabel near ticks,
legend style={at={(0.042,0.752)}, anchor=south west, legend cell align=left, align=left, draw=white!15!black},
hide x axis,
axis y line*=right,
]
\addplot [color=mycolor1, dashed, line width=1.0pt, mark size=2.0pt, mark=x, mark options={solid, mycolor1}]
  table[row sep=crcr]{%
1	12 \\
2	13.8 \\
3	14.2 \\
4	14 \\
5	15.2 \\
6	16.6 \\
7	17.2 \\
8	18.2 \\
9	19 \\
10	20 \\
};

\end{axis}
\end{tikzpicture}
		\end{tabular}
		\caption{Performance of Algorithm~\ref{alg:NEP_SM} on the \texttt{gun} example.}
		\label{fig:gun}
\end{figure}


\paragraph{Comparison with a Direct Application of NLEIGS}
A direct application of NLEIGS with a tolerance of $10^{-4}$ on
the residual for termination leads to 21 eigenvalues inside the half disk. 
This is also consistent with what is reported in \cite{Beeumen2013}. 
We apply the {\sf ALL} variant of Algorithm~\ref{alg:NEP_SM} 
with tolerance $\texttt{tol} = 10^{-4}$ to compute all 21 
eigenvalues inside the specified region closest to $\tau = 146.71^2$.
The computed eigenvalues are nearly the same with those returned
by NLEIGS with relative differences about $10^{-8}$ or smaller.

Runtimes for both approaches are reported in Table \ref{table:gun_time}.
The total times in the last column are listed disregarding the time
for orthogonalization.  This is because the implementation of NLEIGS does 
not exploit the Kronecker structure of the linearization when orthogonalizing 
the subspaces, and it is likely that orthogonalization costs would be
negligible for NLEIGS just like it is the case for the {\sf ALL variant}, 
had the Kronecker structure been taken into account. Still, the total time
for the {\sf ALL} variant is substantially smaller. The table also reveals
that linear system solves dominate the computation time. The difference
of the total time and time for linear system solves for the {\sf ALL} variant
mainly corresponds to the time spent for the construction of the reduced 
problems and their solution. 

\begin{table}[h]
\centering
\caption{A comparison of the runtimes of the {\sf ALL} variant and NLEIGS. The times for 
linear system solves and orthogonalization, as well as the total time excluding the time 
for orthogonalization are listed in seconds for both of the approaches.}
\label{table:gun_time}
\begin{tabular}{l|ccc}
								&	linear systems		&	orthogonalization	&	total time w/o orth.	\\
\hline
NLEIGS							&	7.89				&		31.03		&		13.18		\\
Algorithm \ref{alg:NEP_SM}, {\sf ALL}	&	2.98				&		0.10			&		4.17	
\end{tabular}
\end{table}

\subsubsection{The Particle in a Canyon Problem}
This problem arises from a finite element discretization of the Schr\"{o}dinger equation
for a particle in a canyon-shaped potential well, and is of the form
\[
	T(\lambda) v 	=	\left( H	-	\lambda I	-	\sum_{k=1}^{81}	e^{{\rm i} \sqrt{2 (\lambda	- \alpha_k)}}	A_k \right) v  =  0, 
\]
where $H \in {\mathbb R}^{16281 \times 16281}$ is sparse, $A_1, \dots, A_{81} \in {\mathbb R}^{16281 \times 16281}$ are sparse with rank two,
and $\alpha_1, \dots, \alpha_k \in {\mathbb R}$ are given branch points.
Indeed, the decompositions of $A_k = L_k U_k^\top$ for $L_k, U_k \in {\mathbb R}^{16281 \times 2}$
are available in NLEVP, but in our implementation we do not exploit this low-rank structure. The eigenvalues of
interest are those on the real axis in the interval $(\alpha_1, \alpha_2)$, where $\alpha_1 \approx -0.1979$ and $\alpha_2 \approx -0.1320$.

\paragraph{Estimation of Eigenvalues} 
We compute the closest eigenvalue and the two closest eigenvalues to the target points $\tau_1 = -0.135$ and $\tau_2 = -0.180$
by the {\sf ALL} variant of the subspace method. Five initial interpolation points are chosen equidistantly
in the interval $(\alpha_1, \alpha_2)$. The results are summarized in Table \ref{table:Particle}.
The computed eigenvalues in the second columns, displayed to a five decimal digit 
accuracy, are the same as those returned by a direct application of NLEIGS. Solutions of the projected subproblems by NLEIGS
take nearly half of the computation time.

\begin{table}[h]
\centering
\caption{Results for the {\sf ALL} variant of Algorithm \ref{alg:NEP_SM} on the particle in a canyon problem.  
In the last three columns, times for linear system solves, subproblems, and total runtimes are listed in seconds.}
\label{table:Particle}
\begin{tabular}{l|cccc}
target,  $\#$ eigs		&	computed eigs					&				linear systems		&	subproblems	&	total time			\\
\hline
$-0.135$,  1				&	$-0.13254$						&		0.37				&		0.90		&		1.98		\\
$-0.135$,  2				&	$-0.13254$, $-0.14060$				&		0.38				&		0.92		&		2.11		\\
\hline
$-0.180$,  1				&	$-0.14060$						&		0.37				&		0.91		&		2.04		\\
$-0.180$,  2				&	$-0.13254$, $-0.14060$				&		0.42				&		0.94		&		2.20	
\end{tabular}
\end{table}

\subsubsection{The Partial Delay Differential Equation Problem}
Our final test is on a problem that comes from a finite difference discretization 
of a delay partial differential equation. This problem is abbreviated as \texttt{Pdde}$\_$\texttt{symmetric}
in the NLEVP collection, and has the form
\[
		T(\lambda) v	=	\left(	F	-	\lambda	I	+	e^{-2\lambda} G	\right) v	=	0
\]
for sparse and banded $F,\,G \in {\mathbb R}^{16129\times 16129}$. The eigenvalues close to zero are of interest.
In particular, we seek eigenvalues in the interval $[-1, 1]$.

\paragraph{Estimation of Eigenvalues}  
We compute the six eigenvalues closest to  $\tau = 0.2$. Five initial interpolation points are
chosen randomly on the real axis from a normal distribution with zero mean and variance equal to 0.2.
The six eigenvalue estimates computed and runtimes are reported in Table \ref{table:PDDE}. All of
the six eigenvalue estimates retrieved have residuals smaller than $5 \cdot 10^{-12}$. The computation
time is once again dominated by the solutions of the projected small-scale eigenvalue problems.

\begin{table}[h]
\centering
\caption{{\sf ALL} variant of Algorithm \ref{alg:NEP_SM} on the partial delay differential equation problem.} 
\label{table:PDDE}
\subtable[Computed closest six eigenvalues to $\tau = 0.2$.]{\label{table:PDDEa}
\begin{tabular}{cccccc}
\hline
$-0.00248$	&	$-0.51908$		&	$-0.56141$		&		$-0.84591$		&		$-0.89726$		&	$-0.92237$	
\end{tabular}}
\subtable[Computation times in seconds.]{\label{table:PDDEb}
\begin{tabular}{ccc}
linear system solves		&	subproblems	&	total runtime	\\
\hline
0.40	&	0.72		&	1.30
\end{tabular}}
\end{table}




\section{Concluding Remarks}
We have proposed subspace frameworks based on Hermite interpolation to deal with the estimation of a few eigenvalues
of a large-scale nonlinear matrix-valued function closest to a prescribed target. At every subspace iteration, first
a reduced nonlinear eigenvalue problem is obtained by employing two-sided or one-sided projections inspired from interpolatory
model-order reduction techniques, then the eigenvalues of the reduced eigenvalue problem are extracted, and finally
the projection subspaces are expanded to attain Hermite interpolation between the full and reduced problem at
the eigenvalues of the reduced problem. We have proven that the proposed framework converges at least
at a quadratic rate in theory under a non-defectiveness and a non-degeneracy assumption.

There are several directions that are open to improvement. One of them is the initial selection of the interpolation points.
This may affect the number of subspace iterations. At the moment, we choose the initial interpolation points randomly 
around the target. A more careful selection of them, for instance using the AAA algorithm \cite{Nakatsukasa2018}, 
may reduce the number of subspace iterations, and improve the reliability. Another issue is the partitioning of the nonlinear matrix-valued
function $T(\cdot)$ as in \eqref{eq:partition_T}, where $B(\cdot)$ has few columns and $C(\cdot)$ has few rows. This partitioning
may affect the convergence and stability properties of the framework. It seems even possible to permute the rows and columns of $T(\cdot)$,
and more generally apply unitary transformations from left or right in advance with the purpose of enhancing the convergence and
stability properties. 
We hope to address these issues in a future work.

\smallskip

\textbf{Software.} Matlab implementations of {\sf ALL}, {\sf BR},  {\sf WR}, {\sf ALL1}, {\sf BR1}, {\sf WR1} 
variants of Algorithms~\ref{alg:SM} and~\ref{alg:NEP_SM}, and 
the banded rational eigenvalue problem example in Section \ref{sec:Num_REP_band}, 
as well as two sparse random rational eigenvalue problems (i.e., \texttt{R1}, \texttt{R2}) in Section \ref{sec:Num_REP_spar}
that are experimented on are publicly available at \url{https://zenodo.org/record/5811971}.

Other nonlinear eigenvalue problem examples on which we perform experiments 
in Sections \ref{sec:num_poly} and \ref{sec:num_NEP} are also publicly available in the NLEVP collection \cite{Betcke2012}.

\smallskip

\textbf{Acknowledgements.}
The authors are grateful to two anonymous referees who provided invaluable comments about
the initial versions of this manuscript.

\bibliography{NEP}

\begin{thebibliography}{10}

\bibitem{Aliyev2017}
{\sc A.~Aliyev, P.~Benner, E.~Mengi, P.~Schwerdtner, and M.~Voigt}, {\em
  Large-scale computation of $\mathcal{L}_\infty$-norms by a greedy subspace
  method}, SIAM J. Matrix Anal. Appl., 38 (2017), pp.~1496--1516.

\bibitem{Aliyev2019}
{\sc A.~Aliyev, P.~Benner, E.~Mengi, and M.~Voigt}, {\em A subspace framework
  for $\mathcal{H}_\infty$-norm minimization}, SIAM J. Matrix Anal. Appl., 41
  (2020), pp.~928--956.

\bibitem{Antoulas2005}
{\sc A.~C. Antoulas}, {\em Approximation of Large-Scale Dynamical Systems},
  vol.~6 of Adv. Des. Control, SIAM Publications, Philadelphia, PA, 2005.

\bibitem{Baur2014}
{\sc U.~Baur, P.~Benner, and L.~Feng}, {\em Model order reduction for linear
  and nonlinear systems: {A} system-theoretic perspective}, Arch. Comput.
  Methods Eng., 21 (2014), pp.~331--358.

\bibitem{Gugercin2009}
{\sc C.~Beattie and S.~Gugercin}, {\em Interpolatory projection methods for
  structure-preserving model reduction}, Systems Control Lett., 58 (2009),
  pp.~225--232.

\bibitem{Betcke2012}
{\sc T.~Betcke, N.~J. Higham, V.~Mehrmann, , C.~Schr{\"o}der, and F.~Tisseur},
  {\em {NLEVP}: A collection of nonlinear eigenvalue problems}, ACM Trans.
  Math. Software, 39 (2013), pp.~7:1--7:28.

\bibitem{BreEM20}
{\sc M.~Brennan, M.~Embree, and S.~Gugercin}, {\em Contour integral methods for
  nonlinear eigenvalue problems: A systems theoretic approach}, Preprint
  arXiv:2012.14979, 2020.

\bibitem{DeVillemagne1987}
{\sc C.~De~Villemagne and R.~E. Skelton}, {\em Model reductions using a
  projection formulation}, Internat. J. Control., 46 (1987), pp.~2141--2169.

\bibitem{Effenberger2012}
{\sc C.~Effenberger and D.~Kressner}, {\em Chebyshev interpolation for
  nonlinear eigenvalue problems}, BIT, 52 (2012), pp.~933--951.

\bibitem{FraW75}
{\sc B.~A. Francis and W.~M. Wonham}, {\em The role of transmission zeros in
  linear multivariable regulators}, Internat. J. Control, 22 (1975),
  pp.~657--681.

\bibitem{Gugercin2013}
{\sc S.~Gugercin, T.~Stykel, and S.~Wyatt}, {\em Model reduction of descriptor
  systems by interpolatory projection methods}, SIAM J. Sci. Comput., 35
  (2013), pp.~B1010--B1033.

\bibitem{GutNT20}
{\sc S.~G\"uttel, G.~M.~N. Porzio, and F.~Tisseur}, {\em Robust rational
  approximations of nonlinear eigenvalue problems}, MIMS EPrint 2020.24, 2020.
\newblock Available at \url{http://eprints.maths.manchester.ac.uk/2796/}.

\bibitem{Guttel2017}
{\sc S.~G\"{u}ttel and F.~Tisseur}, {\em The nonlinear eigenvalue problem},
  Acta Numer., 26 (2017), pp.~1--94.

\bibitem{Guttel2014}
{\sc S.~G\"{u}ttel, R.~Van~Beeumen, K.~Meerbergen, and W.~Michiels}, {\em
  {N}{L}{E}{I}{G}{S}: {A} class of fully rational eigenvalue {K}rylov methods
  for nonlinear eigenvalue problems}, SIAM J. Sci. Comput., 36 (2014),
  pp.~A2842--A2864.

\bibitem{Horn1985}
{\sc R.~A. Horn and C.~R. Johnson}, {\em {M}atrix {A}nalysis}, Cambridge
  University Press, 2nd~ed., 2013.

\bibitem{Lancaster1964}
{\sc P.~Lancaster}, {\em On eigenvalues of matrices dependent on a parameter},
  Numer. Math., 6 (1964), pp.~377--387.

\bibitem{LieMPV21}
{\sc P.~Lietaert, K.~Meerbergen, J.~P\'erez, and B.~Vandereycken}, {\em
  Automatic rational approximation and linearization of nonlinear eigenvalue
  problems}, IMA J. Numer. Anal.,  (2021).

\bibitem{Lietaert2018}
{\sc P.~Lietaert, K.~Meerbergen, and K.~Tisseur}, {\em Compact two-sided
  {K}rylov methods for nonlinear eigenvalue problems}, SIAM J. Sci. Comput., 40
  (2014), pp.~A2801--A2829.

\bibitem{Mackey2006}
{\sc D.~S. Mackey, N.~Mackey, C.~Mehl, and V.~Mehrmann}, {\em Vector spaces of
  linearizations for matrix polynomials}, SIAM J. Matrix Anal. Appl., 28
  (2006), pp.~971--1004.

\bibitem{MarPR07}
{\sc N.~Martins, P.~C. Pellanda, and J.~Rommes}, {\em Computation of transfer
  function dominant zeros with applications to oscillations damping control of
  large power systems}, IEEE Trans. Power Syst., 22 (2007), pp.~1657--1664.

\bibitem{Mehrmann2004}
{\sc V.~Mehrmann and H.~Voss}, {\em Nonlinear eigenvalue problems: {A}
  challenge for modern eigenvalue methods}, GAMM-Mitt., 27 (2004),
  pp.~121--152.

\bibitem{Nakatsukasa2018}
{\sc Y.~Nakatsukasa, O.~S\`ete, and L.~N. Trefethen}, {\em The {A}{A}{A}
  algorithm for rational approximation}, SIAM J. Sci. Comput., 40 (2018),
  pp.~A1494--A1522.

\bibitem{QueRP03}
{\sc C.~. Quendo, E.~Rius, and C.~Person}, {\em An original topology of
  dual-band filter with transmission zeros}, IEEE MTT-S International Microwave
  Symposium Digest, 2 (2003), pp.~1093--1096.

\bibitem{Rommes2006}
{\sc J.~Rommes and N.~Martins}, {\em Efficient computation of transfer function
  dominant poles using subspace acceleration}, IEEE Trans. Power Syst., 21
  (2006), pp.~1218--1226.

\bibitem{Su2011}
{\sc Y.~Su and Z.~Bai}, {\em Solving rational eigenvalue problems via
  linearization}, SIAM J. Matrix Anal. Appl., 32 (2011), pp.~201--216.

\bibitem{Tisseur2001}
{\sc F.~Tisseur and K.~Meerbergen}, {\em The quadratic eigenvalue problem},
  SIAM Rev., 43 (2001), pp.~235--286.

\bibitem{Beeumen2013}
{\sc R.~Van~Beeumen, K.~Meerbergen, and W.~Michiels}, {\em A rational krylov
  method based on hermite interpolation for nonlinear eigenvalue problems},
  SIAM J. Sci. Comput., 35 (2013), pp.~A327--A350.

\bibitem{VanBeeumen2015}
{\sc R.~Van~Beeumen, K.~Meerbergen, and W.~Michiels}, {\em Compact rational
  {K}rylov methods for nonlinear eigenvalue problems}, SIAM J. Matrix Anal.
  Appl., 36 (2014), pp.~820--838.

\bibitem{Yousuff1985}
{\sc A.~Yousuff, D.~A. Wagie, and R.~E. Skelton}, {\em Linear system
  approximation via covariance equivalent realizations}, J. Math. Anal. Appl.,
  106 (1985), pp.~91--115.

\end{thebibliography}

\end{document}